\newcommand{\dsp}{\displaystyle}
\newcommand{\R}{\mathbb R}
\newcommand{\N}{\mathbb N}
\newcommand{\C}{\mathbb C}
\newcommand{\xd}{\mathrm{d}}
\newcommand{\B}{\mathcal B}
\newcommand{\Om}{\Omega}
\newcommand{\var}{\operatorname{Var}}
\newcommand{\ee}{\boldsymbol{e}}
\newcommand{\yy}{\boldsymbol{y}}
\newcommand{\vv}{\boldsymbol{v}}
\newcommand{\I}{\mathbb{I}}
\newcommand{\id}{\mathrm{Id}}
\newcommand{\im}{\operatorname{im}}
\renewcommand{\span}{\operatorname{Span}}
\newcommand{\esssup}{\operatorname{ess\,\,sup\,}}
\newcommand{\essinf}{\operatorname{ess\,inf\,}}
\newcommand{\essran}{\operatorname{ess\,ran}}
\newcommand{\esupp}{\mathrm{ess\,supp\,}}
\renewcommand{\geq}{\geqslant}
\renewcommand{\leq}{\leqslant}
\newcommand{\dt}{\Delta t}
\newcommand{\oplor}{\stackrel{\scriptstyle\perp}{\oplus}}
\newcommand{\oplov}{\stackrel{\scriptstyle\perp_v}{\oplus}}
\newcommand{\ie}{\textit{i.e. }}
\newcommand{\re}{\operatorname{Re}}
\newcommand{\eps}{\varepsilon}
\newcommand{\spec}{\mathfrak{S}}
\newcommand{\exv}{\operatorname{E}_v}
\newtheorem{theorem}{Theorem}  
\newtheorem{proposition}[theorem]{Proposition}
\theoremstyle{definition}\newtheorem{remark}{Remark}
\title{Exponential convergence towards consensus \\ for non-symmetric linear first-order systems \\ in finite and infinite dimensions\footnote{This work was partially funded by the ANR-14-ACHN-0030-01 project
\textit{Kimega} and the ANR-20-CE40-0009-04 project \textit{TRECOS}.}}
\author{
Laurent Boudin \footnote{Sorbonne Universit\'e, CNRS, Universit\'e de Paris, Laboratoire Jacques-Louis Lions (LJLL), F-75005 Paris, France (\texttt{laurent.boudin@sorbonne-universite.fr})}
\and
Francesco Salvarani \footnote{L\'eonard de Vinci P\^ole Universitaire, Research Center, 92916 Paris La D\'efense, France \& Dipartimento di Matematica ``F. Casorati'',
Universit\`a degli Studi di Pavia, Via Ferrata 1, 27100 Pavia, Italy (\texttt{francesco.salvarani@unipv.it})}
\and
Emmanuel Tr\'elat \footnote{Sorbonne Universit\'e, CNRS, Universit\'e de Paris, Inria, Laboratoire Jacques-Louis Lions (LJLL), F-75005 Paris, France (\texttt{emmanuel.trelat@sorbonne-universite.fr})}
}
\begin{document}
\maketitle

\begin{abstract}
We consider finite and infinite-dimensional first-order consensus systems with time-constant interaction coefficients. For symmetric coefficients, convergence to consensus is classically established by proving, for instance, that the usual variance is an exponentially decreasing Lyapunov function. We investigate here the convergence to consensus in the non-symmetric case: we identify a positive weight which allows to define a weighted mean corresponding to the consensus, and obtain exponential convergence towards consensus. Moreover, we compute the sharp exponential decay rate. 
\end{abstract}

\section{Introduction} \label{s:intro}

The study of self-organizing dynamics is a very active research subject and a
vast literature is focused on models describing alignment or
agreement/disagreement phenomena caused by the interaction between agents, see
\cite{castellano2009statistical} for instance. 
There exist various strategies, such as the approach based on cellular
automata allowing a continuum of states obeying to a probabilistic interaction 
law \cite{weisbuch2003interacting}, or models describing the time evolution of 
probability densities, see \cite{bou-sal, tos06}.

We consider here the viewpoint of dynamical systems and study
first-order consensus dynamics, following the works of Hegselmann and Krause 
\cite{heg-kra, kra}. We recall moreover that there are other celebrated 
consensus models, for example the alignment second-order models due to 
Vicsek {\it et al.} \cite{vicsek1995novel, vic-zaf-2012} and to Cucker 
and Smale \cite{cuc-sma, cuc-sma-2}, see also \cite{ajm-bel-gib}.

Many studies of the Hegselmann-Krause model were recently implemented, as in \cite{bic-ko-zua, has21, jab-mot, mot-tad, pao21}. We point out \cite{weber2020bounded, web-the-mot}, where the authors proposed graph-theory related ideas to develop their theory. Second-order models were also studied in \cite{degond2008continuum, ha2009simple, jab-mot, mot-tad-2011, shen2007cucker}. There are of course natural questions arising on the control on these models, which led, among other works to \cite{altafini2013consensus,ayd-cap-etal, blondel2009krause, caponigro2013sparse, caponigro2015sparse, olshevsky2009convergence, piccoli2015control2, wongkaew2015control}. The models were also embedded in more sophisticated ones, as in \cite{barbaro2016phase}, or allowed to establish some models hierarchy in asymptotic analysis studies, \textit{e.g.} \cite{carrillo2014derivation, carrillo2010asymptotic, degond2013macroscopic, ha2008particle}. The production of accurate numerical strategies for the quantitative study of these models has been the
goal of, for example, \cite{albi2013binary}.

\paragraph{First-order linear consensus model in finite dimension.}
The Hegselmann-Krause model allows to describe the time evolution of a set of
time-depending state variables $y_i:\R_+\to\R$, $1\leq i \leq N$. Each state
variable evolves with respect to time, according to the following first-order
differential system, which quantifies the modifications induced by the
interactions between all state variables of the system. For any
$1\leq i,j\leq N$, let $\sigma_{ij}\geq 0$ be the interaction frequency of the agent 
$i$ with the agent $j$. The differential system is then written as
\begin{equation} \label{e:krause}
\dot y_i(t) = \sum_{j=1}^N \sigma_{ij} \left( y_j(t) - y_i(t)
\right), \qquad t\in\R_+, \qquad  1\leq i \leq N. 
\end{equation}
The right-hand side of \eqref{e:krause} stands for binary interactions between
agents. Due to the form of the system, the values of $\sigma_{ii}$, $1\leq i \leq
N$, can be arbitrarily chosen since they do not influence the dynamics. Equation~\eqref{e:krause} is supplemented with initial conditions
\begin{equation} \label{e:initconddiscrete}
y_i(0)=y_i^{\mathrm{in}}, \qquad  1\leq i \leq N,
\end{equation}
where $y_i^{\mathrm{in}} \in \R$, $1\leq i \leq N$. 
The linear finite-dimensional Cauchy problem
\eqref{e:krause}--\eqref{e:initconddiscrete} has of course a unique global solution. 

The state variables have many possible interpretations. For example, we can 
consider a population composed of $N$ agents and that, for all $i$, $y_i$ 
represents the position of the agent $i$. In this case,
Equations~\eqref{e:krause}--\eqref{e:initconddiscrete} describe a situation of
\textsl{herding}. More generally, this model belongs to the category of
\textsl{consensus systems} because of its stabilization properties in large
time.

The properties of the system are sensitive with respect to the values of 
$\sigma_{ij}$ as well as the methods of proof. In particular, if the system is 
\textit{symmetric}, \ie when $\sigma_{ij}=\sigma_{ji}$ for any $1\leq i,j\leq N$, or when
$\sigma_{ij}$ only depends on either $i$ or $j$, the mathematical study of
\eqref{e:krause}--\eqref{e:initconddiscrete} can be widely simplified. 
This explains why these assumptions, though restrictive, are popular.

Equation~\eqref{e:krause} can be rewritten in a matrix form. Let us consider 
the matrices $\sigma=(\sigma_{ij})_{1\leq i,j \leq N}\in \R^{N\times N}$ and 
$A\in \R^{N\times N}$ such that
\begin{equation*}
A_{ij}= \sigma_{ij}~~\mbox{ if } i\neq j, \qquad 
A_{ii}=-\sum_{k\neq i} \sigma_{ik}.
\end{equation*}
If we denote by $\operatorname{diag}(z)$ the diagonal matrix where the nonzero coefficients are given by the coordinates of a vector $z\in\R^N$, and if we set $e=(1,\dots,1)^\intercal \in \R^N$, then we can write
\begin{equation}
\label{eq:Adiscr}
A=\sigma-\operatorname{diag}(\sigma e).
\end{equation}
In fact, $A$ can be seen as an arbitrary $N\times N$ real matrix whose
off-diagonal coefficients are nonnegative, and such that the sum of coefficients
of any of its rows is zero. With these notations, the linear problem
\eqref{e:krause}--\eqref{e:initconddiscrete} is written, with the state
$$
y:\R_+\to\R^N, ~t\mapsto 
\begin{pmatrix} y_1(t) \\ \vdots \\ y_N(t) \end{pmatrix},$$
 as
\begin{equation} \label{e:pbdimfinie}
\dot y(t) = A y(t), \qquad y(0)= 
\begin{pmatrix} 
y_1^{\mathrm{in}} \\ \vdots \\ y_N^{\mathrm{in}}
\end{pmatrix}:=y^{\mathrm{in}}.
\end{equation}
This matrix form allows to highlight the difficulties induced by the
non-symmetry of $A$ when investigating the large-time behavior of solutions
to \eqref{e:pbdimfinie}. 

\paragraph{Why the usual ``$\boldsymbol{L^2}$ theory'' cannot be applied in the non-symmetric case.}

The proof of convergence towards consensus is easy to establish in the symmetric case by using the standard Euclidean approach, denoting by $\langle\cdot,\cdot\rangle$ the usual Euclidean scalar product in $\R^N$ and by $|\cdot|$ the associated Euclidean
norm. The classical way to prove convergence towards equilibrium of a dynamical
system consists in studying the time decay of $|y-y_{\mbox{\tiny eq}}|$,
where $y_{\mbox{\tiny eq}} \in\ker A$ is the expected consensus. 

Let us first find out the value of $y_{\mbox{\tiny eq}}$. We shall prove below  that $\ker A$ is the one-dimensional space spanned by $e=(1,\dots,1)^\intercal \in \R^N$ (see Proposition\,\ref{t:poids}). When $A$ is self-adjoint, it is straightforward to prove that $t\mapsto \sum_i y_i(t)$ is constant. Indeed, its derivative equals 
$\langle\dot y(t), e\rangle=\langle Ay(t), e\rangle= \langle y(t),A e\rangle=0$.
Therefore $y_{\mbox{\tiny eq}}=\frac1N(\sum_iy_i^{\mathrm{in}}) e$.

Besides, one has 
\begin{equation*}
\frac12 \frac\xd{\xd t} |y(t)-y_{\mbox{\tiny eq}}|^2 = 
\langle A(y(t)-y_{\mbox{\tiny eq}}),y(t)-y_{\mbox{\tiny eq}}\rangle.
\end{equation*}
Since, thanks to the Gershgorin circle theorem, for any eigenvalue $\mu$ of $A$, there exists $i$ such that
\begin{equation} \label{e:gershgorin}
\Big|\mu+\sum_j \sigma_{ij}\Big| \leq \sum_j \sigma_{ij},
\end{equation}
it is clear that, apart from $0$, all eigenvalues of $A$ have a negative real part. Consequently, if $A$ is self-adjoint, then the system is dissipative and we can prove the convergence of $y$ towards $y_{\mbox{\tiny eq}}$ in large time, with an explicit exponential rate given in terms of nonzero eigenvalues of $A$. 

In contrast, the quantity $\langle Ay,y\rangle$ may be positive for some
$y\in\R^N$ when $A$ is not self-adjoint. For instance, if we choose, for
any $j$, $\sigma_{ij}=1$ when $i<N$ and $\sigma_{Nj}=\sigma_N\neq1$, it is
easy to check that $\langle Ay,y\rangle>0$, with
$y=(1,\cdots,1,\gamma)^\intercal$, for some values of $\gamma\in\R$. It is of
course not contradictory with the the fact that all eigenvalues of $A$ have
nonpositive real parts. 

Therefore, the study of the non-symmetric linear Hegselmann-Krause model cannot
rely on a standard variance-based strategy. In \cite{jab-mot, mot-tad-2011}, the authors
have developed a \emph{$L^\infty$ theory}, or hybrid theories which are at least
partially based on $L^\infty$ tools. Although the latter approach is remarkable, the 
$L^2$ framework remains more convenient to investigate stability and robustness properties of the system, or to design asymptotically stabilizing controls, see \cite{weber2020bounded, web-the-mot}. 

\paragraph{First-order consensus model in infinite dimension.}

In this work, we also tackle the problem generalized to set of a continuum of agents. The corresponding model describes the time evolution of a continuum of agents labelled by a continuous variable lying in an open bounded subset $\Om\subset \R^d$, $d\geq 1$. Without loss of generality, we assume that $|\Om|=1$ (Lebesgue measure of $\Om$). The space $\R^d$ is endowed with the standard Euclidean norm $|\cdot|$. 

Let $\sigma\in L^\infty(\Om^2)$ the generalized nonnegative interaction function, and define $S\in L^\infty(\Om)$ related to $\sigma$ through
\begin{equation} \label{e:defS}
S(x)=\int_{\Om} \sigma(x,x_*)\, \xd x_*, \qquad \mbox{for a.e.}~x\in\Om.
\end{equation}
The unknown is now a function $y:\Om\times\R_+ \to \R$
which evolves according to the dynamics 
\begin{equation}\label{e:diminfinie}
\frac{\partial y}{\partial t}(x,t) = 
\int_\Om \sigma(x,x_*) ( y(x_*,t)-y(x,t) ) \, \xd x_*,
\end{equation}
for $x\in\Om$ and $t\geq 0$, with initial condition 
\begin{equation}\label{e:diminfinieinitcond}
y(\cdot,0) = y^{\mathrm{in}},
\end{equation}
where $y^{\mathrm{in}}\in L^2(\Om)$ is given. 

The previously defined interaction matrix $A$ becomes a linear 
interaction operator $A:L^2(\Om)\to L^2(\Om)$ defined, for every 
$z\in L^2(\Om)$, by
\begin{equation}\label{d:operateurA}
(Az)(x) = \int_\Om \sigma(x,x_*) ( z(x_*)-z(x) ) \, \xd x_*, \qquad
\mbox{for a.e.}~x\in\Om.
\end{equation}
Since $\sigma$ lies in $L^\infty$ and $\Om$ is bounded, $A$ is
bounded. Eventually, we emphasize that $\sigma$ is not assumed to have any 
symmetry properties, so that $A$ is not self-adjoint in general.

The operator $A$ only acts on the labelling variable $x\in\Om$. Hence, the Cauchy problem \eqref{e:diminfinie}--\eqref{e:diminfinieinitcond} can be written as
\begin{equation} \label{e:pbdiminfinie}
\frac{\partial y}{\partial t} = A y, \qquad y(\cdot,0) = y^{\mathrm{in}}.
\end{equation}
Since $A$ is bounded, \eqref{e:pbdiminfinie} has a unique global solution $t\mapsto e^{tA}y^{\mathrm{in}}$, which lies in $C^1(\R_+;L^2(\Om))$. We also have a maximum principle on $y$. To recover this result, we first need to introduce the (Hilbert-Schmidt hence compact) operator $K:L^2(\Om)\to L^2(\Om)$ of kernel $\sigma$ defined by
\begin{equation}\label{e:defK}
(K z)(x) = \int_\Om \sigma(x,x_*) z(x_*) \, \xd x_*, \qquad
\mbox{for a.e.}~x\in\Om,
\end{equation}
and the (bounded) multiplication operator $M_S:L^2(\Om)\to L^2(\Om)$ by $S$, \ie $M_S=S\,\id$ where $\id$ is the identity on $L^2(\Om)$. Then it is possible to write \eqref{d:operateurA} under the form,  to be related to \eqref{eq:Adiscr},
\begin{equation*}
A = K - M_S = K-S\,\id= K-(Ke)\,\id,
\end{equation*}
where $e$ is the constant function equal to $1$, and \eqref{e:diminfinie} as
$$\frac{\partial y}{\partial t}+M_S y = Ky.$$
Using $(x,t)\mapsto y(x,t)e^{S(x)t}$ and the fact that $K$ obviously
preserves nonnegativity, we conclude that the solution of \eqref{e:diminfinie}
remains nonnegative almost everywhere if its initial datum is nonnegative, and
that $y$ remains bounded between the essential infimum and supremum of
$y^{\mathrm{in}}$. 

The article is structured as follows. In the next section, we state our theorems of convergence to consensus. The proof of those results starts with preliminary results, mainly of geometric nature, in Section~\ref{s:AAstarv} and is concluded in Section~\ref{s:convcons}. Then, in Section~\ref{s:lyap}, we present further results: the time-discrete version of the finite-dimensional model, the link between our models, noting that the finite-dimensional model can be seen as the infinite-dimensional one where the counting measure is used instead of the Lebesgue measure in the integrals, and some arguments related to Lyapunov functionals. Eventually, in Section~\ref{s:num}, we describe some numerical simulations.

\section{Main results} \label{s:main}

In what follows, $X$ will denote the state space, which is a Hilbert space endowed with its scalar product $\langle\cdot,\cdot\rangle$ and norm $\|\cdot\|$. This space will be either $\R^N$ or $L^2(\Om)$ endowed with their usual scalar product. With both finite and infinite dimension notations, we have $Ae=0$. Besides, it will also be convenient to denote by $W$ the Banach space where $\sigma$ lies, either $\R^{N\times N}$ or $L^\infty(\Om^2)$.

\subsection{Strong connectivity}
Our work relies on graph-theory-related assumptions on $\sigma$ already discussed in \cite{web-the-mot} when $X=\R^N$. 

\subsubsection*{In finite dimension}
We associate to $(\sigma_{ij})$ the directed graph $G$ (see for instance \cite[Chapter~10]{bon-mur}), whose vertices are $1$, $2$, $\ldots$, $N$, and which has an edge from $i$ to $j$ when $\sigma_{ij}>0$. This concept of directed graph allows to handle the heterogeneity of the reciprocal influence of the agents. In this setting, the agents are the vertices and the matrix $A$ is linked to the edges between two given vertices. More precisely, when an entry of $A$ is zero, there is no direct interaction between the corresponding agents and when an entry of $A$ is positive, the corresponding agents are directly connected. We recall that $G$ is \emph{strongly connected} if, for any pair $(i,j)$ with $i\neq j$, there exists a a finite set of arcs, called a path, joining $i$ to $j$ in $G$, \ie there exists a sequence $(i_0, \dots, i_r)$, $r\in\N^*$, of distinct indices satisfying
$$i_0=i, \quad i_r=j, \qquad \sigma_{i_{k}i_{k+1}}>0, \quad 0\leq k\leq r-1.$$ 
Usually, the strong connectivity definition states that, for any pair $(i,j)$ with $i\neq j$, there exist a path joining $i$ to $j$ \textit{and} a path joining $j$ to $i$, to differ from the weak connectivity notion, for which the previous \textit{and} is replaced by \textit{or}. 

The strong connectivity assumption can be interpreted as a constraint on the links between the agents of the system: for instance, when $G$ is strongly connected, any pair $(i,j)$ of individuals of the population can interact directly (if $\sigma_{ij}>0)$, or indirectly through other individuals (when $\sigma_{ij}=0$ but there is a path between $i$ and $j$).

\subsubsection*{In infinite dimension}
The same notion of directed graph is extended in the infinite-dimensional setting in the following way. 

The vertices of the directed graph $G$ associated to $\sigma\in L^\infty(\Om^2)$ are chosen as the Lebesgue points $x$ of $\sigma$ in $\Om$, \ie the ones such that $x_*\mapsto \sigma(x,x_*)$ is defined almost everywhere in $\Om$. Then, for any vertices $x_1$, $x_2$ such that $x_1\neq x_2$, we say that $(x_1,x_2)$ is an arc if $x_2 \in \esupp \sigma(x_1,\cdot)$. 

The directed graph $G$ is strongly connected if both following properties hold: 
\begin{enumerate}
\item For any Lebesgue points $(x,x_*)$ with $x\neq x_*$, there exists a path joining $x$ to $x_*$ in $G$, \ie there exist two-by-two distinct Lebesgue points $x_0$, ..., $x_r$, $r\in\N^*$ such that 
$$x_0=x, \quad x_r=x_*, \qquad x_{k+1} \in \esupp \sigma(x_k,\cdot), \quad 0\leq k\leq r-1.$$ 
\item Recalling that $S$ is defined by \eqref{e:defS}, we have 
\begin{equation} \label{e:hypSdelta}
\delta := \essinf S >0,
\end{equation}
\end{enumerate}
The first property is exactly the one which defined a strongly connected directed graph in finite dimension. The second one, which means that (almost) every agent can interact with a significant continuum of agents in $\Om$, measured thanks to $\delta$, is clearly satisfied in finite dimension, without further assumption. Indeed, for any $i$, there necessarily exists $j\neq i$ such that $\sigma_{ij}>0$. This ensures that the term $\sum_j \sigma_{ij}$ corresponding to $S$ is positive for any $i$. In fact, that second property is also directly satisfied if, for instance, $\sigma$ is continuous on $\bar\Om^2$ and satisfies the first property. 

\subsection{Main results}
Under this strong connectivity assumption, we can now identify the consensus value in terms of an eigenvector $v$ of $A^*$, for which we prove the existence and positivity properties that were only assumed in \cite{olsab-mur}. We also recover the $L^2$-convergence towards consensus obtained in finite dimension in \cite{web-the-mot}, extend it to the infinite-dimensional model, and provide the sharp convergence rate in both cases. 

We state hereafter our two main results, valid in finite and infinite dimensions, and we start by providing the proper consensus value. 
\begin{theorem} \label{t:poids}
Assume that the graph associated to $\sigma$ is strongly connected. Then there exists a unique $v\in\ker A^*$ such that $v>0$ and $\langle v,e\rangle =1$, and the weighted mean of any solution $y$ to \eqref{e:pbdimfinie} or \eqref{e:pbdiminfinie} defined by $\bar y^v=\langle y(t),v\rangle\,e$ is constant with respect to time.
\end{theorem}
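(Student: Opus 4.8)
The plan is to treat the finite- and infinite-dimensional cases in parallel, through a single Perron--Frobenius/Krein--Rutman argument, after one normalization. In finite dimension I set $\sigma_{ii}=0$ (allowed, since these entries influence neither $A$ nor the graph $G$), so that $A=\Sigma-D$ with $\Sigma=(\sigma_{ij})$, $D=\operatorname{diag}(\Sigma e)=\operatorname{diag}(S_1,\dots,S_N)$, $S_i=\sum_{j\ne i}\sigma_{ij}>0$ for every $i$ under strong connectivity (so $D$ is invertible); this mirrors $A=K-M_S$ with $S=Ke$ in infinite dimension, where $M_S$ is boundedly invertible because $\delta=\essinf S>0$. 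In both cases I introduce the compact positive operator
\[
T:=D^{-1}\Sigma^\intercal\ \ \text{(finite dim.)},\qquad T:=M_S^{-1}K^*\ \ \text{(infinite dim.)},
\]
which preserves the cone of nonnegative vectors resp.\ functions, and I note that $\ker A^*=\ker(I-T)$.

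The first key step is to show that the spectral radius $r(T)$ equals $1$, the point being that $T^*$ has an a.e.-positive fixed vector: a direct computation gives $T^*=\Sigma D^{-1}$ resp.\ $T^*=KM_S^{-1}$, hence $T^*S=\Sigma e=S$ resp.\ $T^*S=Ke=S$, with $S>0$. Thus $1\in\operatorname{spec}(T^*)=\operatorname{spec}(T)$, so $r(T)\ge1$. For the reverse inequality, if $r(T)>1$ then, $T$ being compact and positive, Krein--Rutman (Perron--Frobenius in finite dimension) provides a nonnegative eigenvector $\chi\ne0$ with $T\chi=r(T)\chi$, and pairing yields $r(T)\langle S,\chi\rangle=\langle T\chi,S\rangle=\langle\chi,T^*S\rangle=\langle S,\chi\rangle$; since $S>0$ and $\chi\ge0$, $\chi\ne0$ force $\langle S,\chi\rangle>0$, this gives $r(T)=1$, a contradiction. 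Hence $r(T)=1$, and applying Krein--Rutman to $T$ itself produces $v\ge0$, $v\ne0$, with $Tv=v$, i.e.\ $v\in\ker A^*$.

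The main obstacle is to upgrade $v\ge0$ to $v>0$ (a.e.) and to show $\dim\ker A^*=1$; this is exactly where strong connectivity enters. In finite dimension this is the classical irreducibility argument: from $\Sigma^\intercal v=Dv$ one reads, on $Z=\{i:v_i=0\}$, that $\sigma_{ji}=0$ for all $j\in P:=\{i:v_i>0\}$ and $i\in Z$, i.e.\ no edge of $G$ runs from $P$ to $Z$; a path from a vertex of $P$ to a vertex of $Z$ must cross such an edge, contradicting strong connectivity, so $Z=\emptyset$ and $v>0$, and simplicity of the eigenvalue then follows from Perron--Frobenius for irreducible nonnegative matrices. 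In infinite dimension the same identity gives, via Fubini, that $\sigma(x_*,x)=0$ for a.e.\ $(x_*,x)\in P\times Z$ with $P=\{v>0\}$, $Z=\{v=0\}$; converting this "no arc from $P$ to $Z$" statement into a contradiction with the path property of the strong-connectivity assumption is the delicate part, since it has to be carried out modulo null sets, using that the vertices of $G$ are Lebesgue points of $\sigma$ and that arcs are defined through essential supports — alternatively, one may argue that strong connectivity makes $T$ an irreducible positive operator on $L^2(\Om)$, so that the Krein--Rutman/de Pagter theorem directly gives that $r(T)=1$ is a simple eigenvalue with an a.e.-positive eigenvector. Once $v>0$ and $\dim\ker A^*=1$ are in hand, normalizing by the positive number $\langle v,e\rangle$ yields the unique $v$ of the statement; and since $\dim\ker A=\dim\ker A^*=1$ (equal kernel and cokernel dimensions — $A$ is a square matrix, resp.\ a Fredholm operator of index $0$, being a compact perturbation of the boundedly invertible $-M_S$) while $Ae=0$, we also recover $\ker A=\span(e)$ as announced in the introduction.

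Finally, the invariance of the weighted mean is immediate: for any solution $y$ of \eqref{e:pbdimfinie} or \eqref{e:pbdiminfinie}, $t\mapsto y(t)$ is $C^1$ with values in $X$ and $v$ is fixed, so
\[
\frac{\xd}{\xd t}\langle y(t),v\rangle=\langle \dot y(t),v\rangle=\langle Ay(t),v\rangle=\langle y(t),A^*v\rangle=0,
\]
hence $\langle y(t),v\rangle\equiv\langle y^{\mathrm{in}},v\rangle$ and $\bar y^v=\langle y(t),v\rangle\,e$ is constant in time.
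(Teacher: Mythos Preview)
Your approach via Perron--Frobenius/Krein--Rutman is genuinely different from the paper's. The paper first establishes $\ker A = \span e$ and $\dim\ker A^* = 1$ by a direct strong-connectivity argument plus Fredholm theory (Proposition~\ref{p:ptykerA-kerAstar}), and then proves positivity of a generator of $\ker A^*$ by a \emph{homotopy argument} (Proposition~\ref{p:poidsrappel}): it deforms $\sigma$ to a constant interaction $M$ via $\sigma_\lambda = \lambda\sigma + (1-\lambda)M$, constructs $v_\lambda = \Pi_\lambda e$ where $\Pi_\lambda$ is the projector onto $\ker A_\lambda^*$ along $(\span e)^\perp$ (analytic in $\lambda$ by Kato's perturbation theory), and argues by continuity that $\essinf v_\lambda$ can never reach $0$ along the path, since at the first such $\lambda$ a path argument would force $v_\lambda\equiv 0$. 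In finite dimension your proof is complete and more direct: the digraph of $T = D^{-1}\Sigma^\intercal$ is the reversal of $G$, hence strongly connected, so $T$ is irreducible and the full Perron--Frobenius theorem delivers simplicity and strict positivity in one stroke, bypassing the homotopy entirely. What the homotopy buys, conversely, is that it stays within elementary analytic perturbation theory and avoids the Banach-lattice machinery needed to invoke Krein--Rutman on $L^2$.

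In infinite dimension, however, there is a real gap at precisely the step you yourself flag as ``the delicate part'': neither of your two proposed routes is actually carried out. The second route (appeal to de Pagter/irreducible Krein--Rutman) merely relocates the difficulty, since irreducibility of $T$ on $L^2(\Om)$ means precisely that no measurable $E$ with $0 < |E| < |\Om|$ satisfies $\sigma = 0$ a.e.\ on $E \times E^c$, and deriving this from the paper's Lebesgue-point/essential-support definition of strong connectivity \emph{is} the null-set bookkeeping you are trying to sidestep. The first route (a direct path argument on $P$ and $Z$) does work --- it is essentially what the paper itself carries out inside its homotopy proof to show that a nonnegative, nonzero element of $\ker A_\mu^*$ must be strictly positive a.e.\ --- but you have not written it, and it does not by itself give simplicity. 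A cleaner fix is to reorder your argument: prove $\ker A = \span e$ first by the path argument applied to an arbitrary $y \in \ker A$ (this is Step~2 of the paper's Proposition~\ref{p:ptykerA-kerAstar}), deduce $\dim\ker A^* = 1$ from the Fredholm index as you already note, then invoke only the \emph{weak} Krein--Rutman theorem (valid for the total cone $L^2_+(\Om)$, which has empty interior) to obtain a nonnegative generator $v$, and finally run the path argument once more on $v$ to upgrade to $v>0$ a.e. This avoids irreducible Krein--Rutman altogether.
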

Note that, if $\sigma$ is symmetric, then $A$ is self-adjoint, and $v=e/\|e\|^2$. Let us also point out that $v$ was also defined in \cite{olsab-mur}, but the fact that $v>0$ was only assumed, not proved. The weighted mean $\bar y^v$ is the value of the consensus to which any solution to \eqref{e:pbdimfinie} or \eqref{e:pbdiminfinie} converges in large time. 
\begin{theorem} \label{t:convcons}
Assume that the directed graph associated to $\sigma$ is strongly connected. Let $y:\R_+\to X$ solving \eqref{e:pbdimfinie} or \eqref{e:pbdiminfinie}. Then there exists $\rho>0$ such that, for any $\eps\in (0,\rho)$, there exists $M_\eps>0$ satisfying
$$\|y(t)-\bar y^v\| \leq M_\eps \|y^{\mathrm{in}}-\bar y^v\| e^{(-\rho+\eps)t}, \qquad \forall t\geq0.$$
\end{theorem}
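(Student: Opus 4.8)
The plan is to reduce the convergence statement to a spectral-gap estimate for the restriction of $A$ to a well-chosen invariant complement of $\ker A = \span\{e\}$. First I would fix the weight $v$ provided by Theorem~\ref{t:poids} and introduce the closed hyperplane $X_v = \{z\in X : \langle z,v\rangle = 0\}$. Since $A^*v = 0$, for every $z\in X$ one has $\langle Az,v\rangle = \langle z,A^*v\rangle = 0$, so $\im A\subset X_v$; moreover $X_v$ is $A$-invariant and, because $\langle e,v\rangle = 1\neq 0$, we have the direct sum decomposition $X = \span\{e\}\oplus X_v$. The projection onto $X_v$ along $\span\{e\}$ is exactly $z\mapsto z - \langle z,v\rangle e$, so $y(t) - \bar y^v = y(t) - \langle y^{\mathrm{in}},v\rangle e \in X_v$ solves the same equation $\dot w = Aw$ with $w(0) = y^{\mathrm{in}} - \bar y^v\in X_v$. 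Thus it suffices to prove that $A|_{X_v}$ generates a semigroup with negative growth bound, and to identify that bound.

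Next I would introduce $\rho := -\sup\{\re\lambda : \lambda\in\spec(A|_{X_v})\}$ and argue $\rho > 0$. In finite dimension this is immediate: $\spec(A|_{X_v}) = \spec(A)\setminus\{0\}$ (the eigenvalue $0$ being simple since $\ker A$ is one-dimensional and, as I would check, also $\ker A^2 = \ker A$ by strong connectivity), and by the Gershgorin bound \eqref{e:gershgorin} all nonzero eigenvalues have strictly negative real part, so $\rho>0$. In infinite dimension $A = K - M_S$ with $K$ compact, so $A$ is a bounded perturbation of $-M_S$ by a compact operator; the essential spectrum of $A$ equals that of $-M_S$, namely $-\essran S \subset (-\infty,-\delta]$ by \eqref{e:hypSdelta}, and the rest of $\spec(A)$ consists of eigenvalues of finite multiplicity which, by the same Gershgorin-type argument (or by the maximum principle sketched in the introduction, showing $e^{tA}$ is a positive contraction semigroup in $L^\infty$ whose only fixed directions are multiples of $e$), lie in $\{\re\lambda<0\}\cup\{0\}$ with $0$ again a simple isolated eigenvalue. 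Hence $\spec(A|_{X_v})$ is contained in $\{\re\lambda\leq-\rho\}$ for some $\rho\in(0,\delta]$, with the supremum attained.

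Finally, once the spectral bound is in place, the exponential estimate follows from standard semigroup theory. In finite dimension one invokes the Jordan form of $A|_{X_v}$: writing $\|e^{tA}w\|$ over this invariant subspace, each Jordan block of eigenvalue $\lambda$ contributes a term bounded by a polynomial in $t$ times $e^{(\re\lambda)t}$, so for every $\eps\in(0,\rho)$ there is $M_\eps$ with $\|e^{tA}|_{X_v}\|\leq M_\eps e^{(-\rho+\eps)t}$, and applying this to $w = y^{\mathrm{in}}-\bar y^v$ gives the claim. In infinite dimension the same conclusion comes from the spectral mapping theorem for eventually norm-continuous (in particular, compact perturbations of multiplication) semigroups, or more directly: since $\spec(A|_{X_v})\subset\{\re\lambda\leq-\rho\}$ and the semigroup $(e^{tA})$ is eventually norm-continuous — indeed $e^{tA} = e^{-tM_S}\big(\id + \text{compact}\big)$ via the Dyson–Phillips series, so $\spec(e^{tA}) = e^{t\,\spec(A)}$ — the growth bound of $e^{tA}|_{X_v}$ equals $-\rho$, yielding the same estimate. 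The main obstacle is the middle step: proving in the infinite-dimensional case that no eigenvalue of $A$ with $\re\lambda = 0$ other than $\lambda=0$ exists and that $0$ is a simple \emph{isolated} eigenvalue of $A|_{X_v}$ (so that $\rho>0$ with the supremum attained), which is where strong connectivity, the positivity of $v$, and hypothesis \eqref{e:hypSdelta} must be used together — likely through an irreducibility/Perron–Frobenius argument for the positive semigroup $e^{tA}$ combined with the quantitative gap $\delta$ separating the point spectrum near $0$ from the essential spectrum.
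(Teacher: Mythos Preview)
Your overall architecture matches the paper's: decompose $X=\ker A\oplus X_v$ with $X_v=(\ker A^*)^\perp=\im A$, restrict $A$ to this invariant complement, locate its spectrum, and invoke the spectral mapping theorem. The finite-dimensional part is essentially identical.

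The place where you and the paper diverge is precisely the step you flag as the main obstacle: showing $\spec(A|_{X_v})\subset\{\re\lambda<0\}$ in infinite dimension. You suggest a Gershgorin-type bound, an $L^\infty$ maximum principle, or an irreducibility/Perron--Frobenius argument. Any of these could in principle be pushed through, but the paper's route is cleaner and makes direct use of the weight $v$ you have already built. The paper introduces the $v$-weighted inner product $\langle y,z\rangle_v=\int_\Om vyz$ and computes, using $A^*v=0$ to symmetrize,
\[
\re\langle y,Ay\rangle_v=-\tfrac12\iint_{\Om^2} v(x)\,\sigma(x,x_*)\,|y(x_*)-y(x)|^2\,\xd x_*\,\xd x\leq 0,
\]
with equality iff $y\in\span e$ (this last ``iff'' is exactly where strong connectivity enters). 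Thus $A$ is dissipative and $A|_{\im A}$ is strictly dissipative in the $v$-weighted Hilbert structure, which immediately gives $\spec(A)\subset\{\re\lambda\leq 0\}$ and $\spec(A|_{\im A})\subset\{\re\lambda<0\}$. This single computation replaces the Perron--Frobenius machinery you anticipate, and it is the paper's central idea: the positive weight $v$ is not only the right object to define the consensus value, it also furnishes the correct inner product in which $A$ becomes dissipative.

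Two smaller points. First, your spectral-mapping step is more elaborate than needed: since $A$ (hence $A|_{X_v}$) is \emph{bounded}, the semigroup is uniformly continuous and the spectral mapping theorem applies directly, so the growth bound equals the spectral bound without any appeal to eventual norm continuity or Dyson--Phillips. Second, your claim that the supremum in $\rho$ is attained needs the essential/discrete spectrum split you sketch (essential spectrum in $(-\infty,-\delta]$, only finitely many eigenvalues in any strip $\{-\delta+\eps\leq\re\lambda<0\}$ by compactness of $\spec(A)$); the paper does exactly this, so your outline there is fine.
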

The convergence rate $\rho$ was already exhibited in \cite{web-the-mot} when $X=\R^N$, as being $|\re \lambda_2|$, where $\lambda_2$ is the eigenvalue of $A$ whose real part is the highest one, apart from $0$. Note that, if $A$ is a symmetric matrix, $|\re \lambda_2|$ is the so-called Fiedler number, which measures the strong connectivity of the graph associated to $(\sigma_{ij})$, see \cite{olsab-mur}. We shall prove that the sharp value of $\rho$ is
$$\rho=\mathrm{s}(A_2),$$
where $A_2:\im A \to \im A$ is the homoeomorphism defined by $A_2 z=Az$ for every $z\in \im A$ (indeed, we shall see that $\im A = \im A^2$), and $\mathrm{s}(A_2)<0$ is the spectral bound of $A_2$.

\begin{remark} \label{r:partialconnect}
If the strong connectivity property is not satisfied on the whole set of agents, but only on subsets of a disjoint partition of that set, we recover a clustering effect and the optimal exponential convergence rate can be computed in the same way as explained above.
\end{remark}

\paragraph{Strategy of proof.}

Let us briefly explain the key arguments of our proof of Theorems~\ref{t:poids}--\ref{t:convcons}. We start by checking in Proposition~\ref{p:ptykerA-kerAstar} that the null spaces of $A$ and $A^*$ are one-dimensional, and that $\ker A$ is generated by $e$, which is defined by \eqref{e:def-e}. Then we prove that any nonzero element $y\in\ker A^*$ is either positive or negative (meaning that $y>0$ or $y<0$ almost everywhere when $X=L^2(\Om)$ or that all coordinates of $y$ are nonzero and have the same sign when $X=\R^N$). To prove this fact, we perform a deformation of the model: we define a homotopy path joigning the non-symmetric interaction function (or matrix) $\sigma$ to a symmetric one, and we show that, along this path, elements of $\ker A^*$ always keep the same sign. This ensures the existence and uniqueness of a positive weight $v\in\ker A^*$ as in Proposition~\ref{p:poidsrappel}, which is used to compute the consensus value $\bar y^v$. Moreover, we define a Hilbert structure on $X$, equivalent to the standard one, involving a scalar product and a norm weighted by $v$. Proposition~\ref{p:directsums} then clarifies the geometric context of our convergence result. Indeed, in order to prove Theorem~\ref{t:convcons}, we also need to introduce the $v$-orthogonal projector $\pi$ on $\im A$ and the homeomorphism $A_2:\im A\to\im A$, $z\mapsto Az$. In finite dimension, the matrix $A_2$ is Hurwitz, which is enough to conclude, and incidentally recover the well-known property related to the Fiedler number when $A$ is symmetric. In infinite dimension, we perform a thorough study of the discrete and essential spectra of the operators $A$ and $A_2$, beginning with Proposition~\ref{p:spectreA}, where the Hilbert structure weighted by $v$ is crucial, since it allows to work in the appropriate setting, with respect to $v$. The sharp exponential rate is then obtained as $|\mathrm{s}(A_2)|$, where $\mathrm{s}(A_2)<0$ is the spectral bound of $A_2$. 

\section{Properties of $\boldsymbol{A}$ and $\boldsymbol{A^*}$, definition of the weight} \label{s:AAstarv}
This section is dedicated to the preliminary results required to prove Theorems~\ref{t:poids}--\ref{t:convcons}. The reader can focus on the statement (and skip the proof) of the various propositions below, before getting to Section~\ref{s:convcons}. 

Let us first write the expression of $A^*$, which is the same notation for the transposed matrix of $A$ when $X=\R^N$ and the adjoint operator of $A$ when $X=L^2(\Om)$. We have, for any $z\in\R^N$, 
\begin{equation*}
(A^*z)_i = \sum_j \sigma_{ji} z_j- \Big(\sum_j \sigma_{ij}\Big)z_i,\qquad
1\leq i\leq N,
\end{equation*}
and, for any $z\in L^2(\Om)$,
\begin{multline*}
A^*z(x) = \int_\Om \sigma(x_*,x) z(x_*)\,\xd x_*- \left(\int_\Om \sigma(x,x_*)\,\xd x_*\right)z(x) \\
=\int_\Om \sigma(x_*,x) z(x_*)\,\xd x_*- S(x)z(x),\qquad
\mbox{for a.e.}~x\in\Om.
\end{multline*}
Consequently, if $z\in\ker A^*$, we have either, when $X=\R^N$, 
\begin{equation*}
\Big(\sum_j \sigma_{ij}\Big)z_i = \sum_j \sigma_{ji} z_j, \qquad 1\leq i\leq N, 
\end{equation*}
or, when $X=L^2(\Om)$, 
\begin{equation}\label{e:ptySker}
S(x) z(x) = \int_\Om \sigma(x_*,x) z(x_*)\,\xd x_*, \qquad \mbox{for a.e.}~x\in\Om. 
\end{equation}
Let us emphasize that the previous equality implies that $\ker A^*$ is continuously embedded in $L^\infty(\Om)$. Indeed, consider $z\in \ker A^*$. From \eqref{e:hypSdelta}--\eqref{e:ptySker}, we have, for almost every $x\in\Om$, 
$$|z(x)|=\frac{1}{S(x)} \left|\int_\Om \sigma(x_*,x) z(x_*)\,\xd x_*\right| \leq
\frac{\|\sigma\|_{L^\infty}}{\delta} \|z\|_{L^2(\Om)}.$$

We can now proceed with the preliminary propositions leading to the proof of Theorems~\ref{t:poids}--\ref{t:convcons}. We start by recalling that
\begin{equation} \label{e:def-e}
e=(1,\dots,1)^\intercal ~~ \mbox{if }X=\R^N, \qquad e=1 ~~ \mbox{if }X=L^2(\Om).
\end{equation}

\subsection{Null spaces of $\boldsymbol{A}$ and $\boldsymbol{A^*}$}

\begin{proposition} \label{p:ptykerA-kerAstar}
The following properties hold: 
\begin{enumerate}[(i)]
\item $\ker A=\ker A^2$ is a one-dimensional subspace of $X$ spanned by $e$,
\item $\ker A^*=\ker (A^*)^2$ is a one-dimensional subspace of $X$, 
\item $0$ is a simple eigenvalue of both $A$ and $A^*$.
\end{enumerate}
\end{proposition}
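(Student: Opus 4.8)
The plan is to prove (i) directly using the maximum principle built into the structure of $A$ and the strong connectivity hypothesis, then deduce (ii) and (iii) by abstract duality and rank arguments. For (i), I would first show $\ker A = \span\{e\}$. The inclusion $\span\{e\}\subseteq\ker A$ is immediate since $Ae=0$. For the reverse inclusion, suppose $z\in\ker A$, i.e. $S(x)z(x)=\int_\Om \sigma(x,x_*)z(x_*)\,\xd x_*$ a.e. (resp. $(\sum_j\sigma_{ij})z_i=\sum_j\sigma_{ij}z_j$ in the discrete case). Let $m=\esssup z$ (resp. $\max_i z_i$), which is attained (in the discrete case trivially; in the $L^2$ case one works at a Lebesgue point where $z$ reaches values arbitrarily close to $m$, using the $L^\infty$ embedding of $\ker A^*$ — wait, here $z\in\ker A$, but an analogous bound $\|z\|_{L^\infty}\le (\|\sigma\|_{L^\infty}/\delta)\|z\|_{L^2}$ holds by the same computation). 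At a point $x$ where $z(x)=m$, the identity forces $\int_\Om\sigma(x,x_*)(m-z(x_*))\,\xd x_* = 0$; since the integrand is nonnegative, $z(x_*)=m$ for a.e. $x_*\in\esupp\sigma(x,\cdot)$. Propagating this along a path given by strong connectivity shows $z\equiv m$, hence $\ker A=\span\{e\}$, which is one-dimensional.

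Next I would show $\ker A^2=\ker A$. Only $\ker A^2\subseteq\ker A$ needs proof. Take $z\in\ker A^2$, so $Az\in\ker A=\span\{e\}$, say $Az=ce$. The key is that $c=0$: pairing with the (not-yet-constructed) left eigenvector would be circular, so instead I use a direct argument. If $X=\R^N$, every row of $A$ sums to zero, so $e^\intercal A = (\mathbf{1}^\intercal\sigma^\intercal - \mathbf{1}^\intercal\operatorname{diag}(\sigma e))$; this is not zero in general, so that won't do directly. Better: evaluate at an extremum. Writing $w=Az$, if $w=ce$ with $c>0$, then $\dot z$-type reasoning — actually, apply the maximum-principle identity to $z$ at a point $x$ where $z$ is maximal: $0\le S(x)z(x)-\int\sigma(x,x_*)z(x_*)\xd x_* = -w(x) = -c$, forcing $c\le 0$; applying it at a minimum gives $c\ge 0$; hence $c=0$ and $z\in\ker A$. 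So $\ker A=\ker A^2$.

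Finally, (ii) and (iii). Since $A$ is either a square matrix or (in infinite dimension) a compact perturbation $K-M_S$ of the invertible-modulo-scalar operator $-M_S$ — more precisely $A+M_S=K$ is compact, so $0$ is either in the resolvent set or an eigenvalue of finite algebraic multiplicity with $X=\ker(A^k)\oplus\im(A^k)$ for large $k$ (Riesz–Schauder / Fredholm theory, as $A = -M_S + K$ with $M_S$ boundedly invertible by \eqref{e:hypSdelta})  — the generalized eigenspace of $A$ at $0$ is finite-dimensional and equals $\ker A^k$ for some $k$; by the previous paragraph $\ker A^2=\ker A$, so that generalized eigenspace is exactly $\ker A$, one-dimensional, proving $0$ is a simple eigenvalue of $A$ (algebraic multiplicity one). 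Since $A^*$ has the same spectrum with the same algebraic multiplicities (transpose in finite dimension; adjoint of a Fredholm-type operator in infinite dimension, where $\dim\ker A^* = \operatorname{codim}\im A = \dim\ker A$ and algebraic multiplicities of $\lambda$ and $\bar\lambda$ coincide, here $\lambda=0$ is real), $0$ is also a simple eigenvalue of $A^*$, giving $\ker A^* = \ker(A^*)^2$ one-dimensional. I expect the main obstacle to be the infinite-dimensional bookkeeping: justifying that $\ker A^k$ stabilizes and that the algebraic multiplicity is finite requires invoking the Fredholm/Riesz–Schauder structure of $A=-M_S+K$ carefully (in particular that $0$ is an isolated point of the spectrum with finite-rank Riesz projector), and matching multiplicities for $A$ and $A^*$; the maximum-principle core of the argument, by contrast, is routine.
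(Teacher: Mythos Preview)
Your proposal is correct and follows essentially the same route as the paper: the maximum-principle argument for $\ker A=\span\{e\}$ and the extremum argument forcing $c=0$ in $Az=ce$ are exactly Steps~2 and~3 of the paper's proof. For the duality step the paper is slightly more economical: it factors $A=M_S(M_S^{-1}K-\id)$ and applies the Fredholm alternative to $M_S^{-1}K-\id$ directly to obtain $\dim\ker A=\dim\ker A^*<\infty$, then observes that the properties of $A^*$ follow; you invoke Riesz projectors and isolated spectrum, which is more than needed (and note that once you have $\ker A=\ker A^2$, stabilisation of $\ker A^k$ is automatic by induction, no Riesz--Schauder required).
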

Note that we can also obtain that $\dim \ker A=1$ thanks to the Perron-Frobenius theorem, as in \cite{web-the-mot}.

\begin{proof} Proposition~\ref{p:ptykerA-kerAstar} is proved hereafter in a unified way, not depending on the fact that we work in a finite-dimensional setting or not. However, its proof may require some arguments based on dimension-related arguments on $X$.

\subsubsection*{Step 1 -- $\boldsymbol{\ker A}$ and $\boldsymbol{\ker A^*}$ are finite-dimensional subspaces of $\boldsymbol{X}$.}
Since it is obvious when $X=\R^N$, let us focus on the infinite-dimensional setting. The fact that $\sigma \in L^\infty(\Om^2)$ also implies that the operator $K$ given by \eqref{e:defK} is compact $L^2(\Om)\to L^2(\Om)$ as a Hilbert-Schmidt operator with kernel $\sigma \in L^2(\Om^2)$. Consequently, thanks to the assumptions on $S$, $M_S^{-1}K$ is compact $L^2(\Om)\to L^2(\Om)$. Hence, $M_S^{-1}K - \id$ satisfies the Fredholm 
alternative. In particular,
\begin{equation} \label{e:fredhMSKs}
\dim \ker (M_S^{-1}K-\id) = \dim \ker (M_S^{-1}K-\id)^*)<+\infty.
\end{equation}
Then we observe that
\begin{equation} \label{e:AsigAsig*}
A=M_S(M_S^{-1}K-\id), \qquad A^*=(M_S^{-1}K-\id)^*M_S,
\end{equation}
since $M_S$ is self-adjoint. Equalities~\eqref{e:AsigAsig*} ensure
that
$$\ker A = \ker (M_S^{-1}K-\id), \qquad M_S(\ker A^*)= \ker (M_S^{-1}K-\id)^*,$$
which, together with \eqref{e:fredhMSKs}, imply 
$$\dim \ker A = \dim \ker A^*<+\infty.$$

\subsubsection*{Step 2 -- $\boldsymbol{\ker A = \span e}$}
We already noticed that $e\in\ker A$. Let us now prove that any $y\in\ker A$ belongs to $\span e$. This property mostly follows from the strong connectivity of the graph associated to $\sigma$.

\medskip

When $X=L^2(\Om)$, we have to prove that $y$ is constant almost everywhere in $\Om$. Modifying $y$ if necessary on a zero-measure subset of $\Om$, we can choose $x_0\in\Om$ such that $y(x_0)=\esssup u$, which is finite, since $y$ lies in $L^\infty(\Om)$. It is also possible to choose $x_0$ as a Lebesgue point of $\sigma$, \ie so that $x_*\mapsto \sigma(x_0,x_*)$ is defined almost everywhere in $\Om$. Consider now $\xi\in\Om$ some Lebesgue point for both $y$ 
and $\sigma$ at the same time, for which we intend to prove that $y(\xi)=y(x_0)$. By strong connectivity, there exist pairwise distinct elements of $\Om$, $(x_1,\ldots,x_K)$, $K\in\N^*$, which are Lebesgue points for both $\sigma$ and $y$ such that $x_{K}=\xi$, and for any $0\leq k<K$, 
$$|\esupp \sigma(x_k,\cdot)|>0, \qquad x_{k+1} \in \esupp \sigma(x_k,\cdot).$$
Besides, because of \eqref{e:ptySker}, for any $0\leq k<K$, 
$$y(x_k)\int_\Om \sigma(x_k,x_*)\,\xd x_* = \int_\Om \sigma(x_k,x_*)y(x_*)\,\xd x_*.$$
For $k=0$, the previous equality implies that
$$\sigma(x_0,x_*) (y(x_0)-y(x_*))=0, \qquad \mbox{for a.e.}~x_*\in \Om,$$
and then $y(x_*)=y(x_0)$ for almost every $x_* \in \esupp \sigma(x_0,\cdot)$. In particular, it holds for $x_*=x_1$, \ie $y(x_1)=y(x_0)=\esssup u$. The conclusion $y(\xi)=y(x_{K})=\dots=y(x_0)$ is then straightforward by induction. This ensures that $y$ is equal to its essential supremum almost everywhere. 

\medskip

When $X=\R^N$, the proof follows the same idea. We provide it for completeness. Denote by $i_0$ an index such that $y_{i_0}=\max_i y_i$. Choose then an arbitrary index $i^*\neq i_0$. By strongly connectivity, there exist pairwise distinct indices $(i_k)_{1\leq k\leq K}$, $K\in\N^*$, such that $i_K=i^*$ and $\sigma_{i_ki_{k+1}}>0$ for any $k$. Consequently, for any $k$, we have
$$\sum_{j\neq i_{k+1}}  \sigma_{i_kj}(y_{j}-y_{i_k}) +
\sigma_{i_ki_{k+1}}(y_{i_{k+1}}-y_{i_k})=0, $$
which immediately implies that $y_{i_{k+1}}=y_{i_k}$ by induction. Hence, 
$y_{i_0}=y_{i^*}$ for any $i^*$, which ensures that $y=y_{i^*} e$.

\subsubsection*{Step 3 -- $\boldsymbol{\ker A = \ker A^2}$}

There is only one non-trivial inclusion, to be proved again by strong connectivity. Let $y\in \ker A^2$, so that $Ay \in \ker A$. Thanks to Step~1, there exists $\nu \in\R$ such that $Ay=\nu e$. 

When $X=L^2(\Om)$, the previous equality yields 
$$
\int_\Om \sigma(x,x_*)\,(y(x_*)-y(x))\,\xd x_* = \nu, \qquad \mbox{for a.e.}~x\in \Om.
$$ 
Modifying $y$ if necessary on a zero-measure subset of $\Om$, we can choose a Lebesgue point $x\in\Om$ for $\sigma$ such that $y(x)=\esssup y$. Hence, $\nu\leq 0$. In the same way, we get $\nu\geq 0$. Consequently, $\nu=0$ and $y\in \ker A$. 

When $X=\R^N$, we have 
$$
\sum_j \sigma_{ij}(y_j-y_i)=\nu, \qquad 1\leq i\leq N,
$$
and we successively choose $i$ as an index such that $y_i=\max_j y_j$ and $y_i=\min_j y_j$ to obtain $\nu=0$ and $y \in \ker A$. 

\subsubsection*{Step 4 -- Conclusion}

The fact that $0$ is a simple eigenvalue of $A$ is a direct consequence of Steps~2--3. The adjoint $A^*$ of $A$ naturally inherits all the properties of $A$ previously proved: $\dim \ker A^*=1$, $\ker A^* =\ker (A^*)^2$ and $0$ is a simple eigenvalue of $A^*$.
\end{proof}

\subsection{Definition of the positive weight}

Since $\ker A^*$ is one-dimensional, let us focus on a particular vector generating $\ker A^*$ and prove the first part of Theorem~\ref{t:poids}, which we recall in the proposition below and is, in some sense, at the heart of the overall proof.

\begin{proposition} \label{p:poidsrappel}
There exists a unique $v\in \ker A^*$ such that $v>0$ and $\langle v,e\rangle =1$.
\end{proposition}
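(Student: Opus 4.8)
The plan is to establish, for the nonnegative and sign properties of a generator of $\ker A^*$, a homotopy argument connecting the non-symmetric $\sigma$ to a symmetric interaction function. First I would fix, by Proposition~\ref{p:ptykerA-kerAstar}(ii), a nonzero $v_0 \in \ker A^*$ and normalize it by $\langle v_0, e\rangle$; note that $\langle v_0, e\rangle \neq 0$, since otherwise $v_0$ would be $v$-orthogonal to $e$, which (combined with the geometric decomposition) is incompatible with $v_0 \in \ker A^*$ — more concretely, $\langle v_0, e\rangle = 0$ together with $A^* v_0 = 0$ would force $v_0 \perp \im A$ and $v_0 \perp \ker A = \span e$, hence $v_0 = 0$ in the finite-dimensional case, and an analogous Fredholm argument in infinite dimension. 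So uniqueness (given positivity) is the easy part: two positive elements of the one-dimensional space $\ker A^*$ are proportional, and the normalization $\langle v, e\rangle = 1$ pins down the scalar.

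The core is positivity. I would introduce, for $s \in [0,1]$, the interaction function $\sigma_s = (1-s)\sigma + s\,\sigma^{\mathrm{sym}}$ where $\sigma^{\mathrm{sym}}(x,x_*) = \tfrac12(\sigma(x,x_*) + \sigma(x_*,x))$ (and the analogous convex combination of matrices in finite dimension), with associated operator $A_s$. Each $\sigma_s$ is still nonnegative and $L^\infty$; crucially, $\sigma_s$ has the same zero pattern support as $\sigma$ when $s < 1$ — actually $\esupp \sigma_s(x,\cdot) \supseteq \esupp \sigma(x,\cdot)$ for $s \in [0,1)$ — so the strong connectivity (first property) is preserved, and $S_s = \int \sigma_s(x,\cdot)$ satisfies $\essinf S_s > 0$ as well, using that $\essinf S > 0$ and $\essinf \int \sigma^{\mathrm{sym}} = \essinf S$ by Fubini. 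Hence Proposition~\ref{p:ptykerA-kerAstar} applies to every $A_s$: $\ker A_s^*$ is one-dimensional. I would then pick a generator $v_s$ of $\ker A_s^*$ normalized by $\langle v_s, e\rangle = 1$, and show $s \mapsto v_s$ is continuous on $[0,1]$ — this follows from continuity of $s \mapsto A_s$ in operator norm together with the fact that $0$ is an isolated simple eigenvalue with uniformly bounded resolvent away from $0$ (the spectral projector onto $\ker A_s^*$ depends continuously on $s$; in finite dimension this is standard, in infinite dimension one uses the Kato perturbation theory for the isolated eigenvalue $0$, legitimate since the essential spectrum of $A_s$ stays in a fixed half-plane $\re z \leq -\delta + \|\sigma\|_{L^\infty}$... in fact clustered near $-S_s$, hence bounded away from $0$ uniformly). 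At $s = 1$, $A_1$ is self-adjoint, so $v_1 = e/\|e\|^2 > 0$.

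Now define $Z = \{ s \in [0,1] : v_s > 0 \text{ a.e.} \}$ (or all coordinates strictly positive, in $\R^N$). It is nonempty ($1 \in Z$). The key pointwise argument — the same strong-connectivity propagation used in Step~2 of the previous proof — shows that for each fixed $s$, \emph{any} nonzero element of $\ker A_s^*$ has all its coordinates (or a.e.-values) nonzero and of one constant sign: indeed from $S_s(x) z(x) = \int \sigma_s(x_*,x) z(x_*)\,\xd x_*$, if $z$ vanished somewhere one propagates the zero along reversed paths of the graph using strong connectivity, contradiction; and a sign change combined with this would again force a zero. Consequently $v_s$ is, for every $s$, either $>0$ everywhere or $<0$ everywhere, so the map $s \mapsto \operatorname{sign}(v_s)$ (a well-defined element of $\{+1,-1\}$) is locally constant by continuity of $v_s$, hence constant on the connected set $[0,1]$. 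Since it equals $+1$ at $s=1$, it equals $+1$ at $s = 0$, giving $v_0 > 0$; rescaling to $\langle v,e\rangle = 1$ finishes.

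The main obstacle I anticipate is the infinite-dimensional continuity claim $s \mapsto v_s$: one must argue that $0$ remains an isolated eigenvalue of $A_s$ with a spectral projector varying continuously in $s$, which requires locating the rest of $\spec(A_s)$ away from $0$ uniformly. The cleanest route is to write $A_s = M_{S_s}(M_{S_s}^{-1}K_s - \id)$ with $M_{S_s}^{-1}K_s$ compact, so $\spec(A_s) \setminus \{-S_s\text{-values}\}$ is discrete, and to use that $0$ is a simple eigenvalue (Proposition~\ref{p:ptykerA-kerAstar}) together with Gershgorin-type/Hilbert–Schmidt bounds to keep a fixed punctured neighborhood of $0$ free of other spectrum as $s$ varies on a compact interval — then Kato's analytic (here just continuous) perturbation of isolated eigenvalues applies, and the rank-one spectral projectors $P_s$ converge in norm, yielding continuity of the normalized generator $v_s$. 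The finite-dimensional case is comparatively routine, as $P_s$ is a contour integral of $(z - A_s)^{-1}$ depending continuously on $s$.
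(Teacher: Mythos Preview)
Your overall strategy---a homotopy to a symmetric interaction and a continuity argument for the spectral projector---is exactly the paper's approach (the paper homotopes to the constant interaction $M\equiv\|\sigma\|_{L^\infty}$ rather than to $\sigma^{\mathrm{sym}}$, and argues analyticity rather than mere continuity of $s\mapsto v_s$, but these are inessential variations). Uniqueness and the endpoint identification $v_1>0$ are handled the same way.

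There is, however, a genuine gap in your positivity step. You claim that for each fixed $s$, \emph{any} nonzero $z\in\ker A_s^*$ has all coordinates (or a.e.\ values) nonzero and of one constant sign, and you justify this by: (a) if $z$ vanishes somewhere, propagate the zero along reversed paths; (b) a sign change would force a zero. Both steps are defective as stated. The relation $S_s(x)z(x)=\int\sigma_s(x_*,x)z(x_*)\,\xd x_*$ does \emph{not} let you propagate a zero of $z$ unless you already know $z\geq 0$ (or $z\leq 0$): with mixed signs the integral can vanish by cancellation. And ``a sign change forces a zero'' is simply false for vectors in $\R^N$ (e.g.\ $(1,-1)$) and for $L^\infty$ functions (e.g.\ an indicator minus its complement). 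So as written you have not established the definite sign of $v_s$ for each $s$, and your locally-constant-sign argument does not get off the ground.

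The paper's remedy---and the easiest fix for you---is not to argue sign-definiteness at each $s$ independently, but to exploit the continuity of $s\mapsto v_s$ directly. Starting from $v_1>0$, let $\mu$ be the first parameter (decreasing from $1$) at which $\essinf v_\mu=0$ (or $\min_i (v_\mu)_i=0$). By continuity one still has $v_\mu\geq 0$; \emph{now} the zero-propagation along reversed paths is legitimate (the integrand is nonnegative), and strong connectivity forces $v_\mu\equiv 0$, contradicting $v_\mu\neq 0$. Hence no such $\mu$ exists and $v_0>0$. Alternatively you could invoke a Perron--Frobenius/Krein--Rutman argument for each $A_s^*$ to get the definite sign directly, but that is a different tool from the ``Step~2'' propagation you cite.
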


\begin{proof}
The uniqueness of $v$ is straightforward, it relies on the fact that $\dim \ker A^*=1$. 
Otherwise, the proof of Proposition~\ref{p:poidsrappel} is mainly based on an homotopy argument: we use the symmetric (in fact constant) case to conclude for the non-symmetric one. Indeed, in the symmetric case, when $X=\R^N$, $v=e/N$ and, when $X=L^2(\Om)$,  $v=e/|\Om|=e$ clearly are the only elements of $\ker A^*$ satisfying the required sign and scalar-product properties. 

Denote by $M$ the constant function equal to $\|\sigma\|_{L^\infty}$ if $X=L^2(\Om)$, and the matrix with all its coefficients equal to $\max_{i,j} \sigma_{ij}$ if $X=\R^N$, and consider
the analytic function 
$$[0,1]\to W, \quad \lambda\mapsto \sigma_\lambda:=\lambda \sigma + (1-\lambda)M.$$
In the remainder of this proof, for the sake of clarity, we denote with an index
$\lambda\in[0,1]$ any matrix, operator or function built from $\sigma_\lambda$, \textit{e.g.} $A_\lambda$. Of course $A_\lambda$ inherits all the properties already known for $A$. Moreover, $\lambda\mapsto A_\lambda^*$ is analytic. 

Set $F=(\span e)^\perp=(\ker A_\lambda)^\perp=\im A_\lambda^*$, for any $\lambda\in[0,1]$. Since $\ker A_\lambda^* = \ker (A_\lambda^*)^2$, $F\cap \ker A_\lambda^*=\{0\}$. This implies that $X=F\oplus \ker A_\lambda^*$. Let $\Pi_\lambda$ be the projector onto $\ker A_\lambda^*$ along $F$, which analytically depends on $\lambda$, see \cite[Chapter~VII, \textsection~1, Section~3, Theorem~1.7]{kato}. The function of $\lambda$ defined by $v_\lambda=\Pi_\lambda e$ meets all the required properties: it is analytic, and each $v_\lambda$ is a non-trivial element of $\ker A_\lambda^*$. For the sake of completeness, we provide hereafter a more explicit construction of $\Pi_\lambda$ and $v_\lambda$. Consider the operator $J_\lambda:F\to F$, $y\mapsto A_\lambda^*y$, which is injective since $F\cap \ker A_\lambda^*=\{0\}$. Of course, $\lambda\mapsto J_\lambda$ is analytic. 

Then, for the sake of clarity, it is better to provide two different proofs depending on whether $X=L^2(\Om)$ or $X=\R^N$. 

\subsubsection*{Case 1 -- Infinite-dimensional case}

First, the operator $J_\lambda$ belongs to the Banach algebra  $\B(F)$ of bounded operators on $F$. Second, it is also surjective. Indeed, since $L^2(\Om)=\span e \oplus F$, we just have to prove that $A_\lambda^* e$ has a pre-image by $A_\lambda^*$ which lies in $F$. Let $w\neq 0$ spanning $\ker A_\lambda^*$. Using the previous direct sum, we can write $w=\alpha e +f$, for some $\alpha \in \R$ and $f\in F$. But $\alpha\neq 0$, which is proved by contradiction: if $\alpha=0$, then $\langle w,e\rangle=0$, which ensures that $e\in (\ker A_\lambda^*)^\perp=\im A_\lambda$. Since $\ker A_\lambda^2 = \ker A_\lambda$, that would imply that $e=0$, which is not the case. Consequently, we can write $e$ as $e=\frac 1\alpha(w-f)$ and then $A_\lambda^* e=-\frac 1\alpha A_\lambda^*f$. Thus $J_\lambda$ is surjective, and subsequently, bijective. 

Furthermore, thanks to the closed graph theorem, $J_\lambda^{-1}$ also lies in $\B(F)$, and since $\lambda\mapsto J_\lambda$ is clearly analytic and $\B(F)$ is a Banach algebra, then $\lambda\mapsto J_\lambda^{-1}$ is also analytic, and so is $\lambda\mapsto\Pi_\lambda=\id-J_\lambda^{-1}A_\lambda^*$. We can then conclude by setting $v_\lambda=\Pi_\lambda e$. The fact that $v_\lambda$ analytically depends on $\lambda$ is now clear. Since $e\not\in F$, $v_\lambda\neq 0$ for any $\lambda$, in $L^2(\Om)$ and almost everywhere. Finally, we have $A_\lambda^*v_\lambda=A_\lambda^*e-A_\lambda^*e=0$. 

Let us now prove that $v_\lambda$ remains positive for any $\lambda$ and almost everywhere on $\Om$. Thanks to \eqref{e:ptySker} applied to $v_\lambda$, we can write  
$$
v_\lambda(x)=\frac 1{S_\lambda(x)} \int_\Om \sigma_\lambda(x_*,x)
v_\lambda(x_*)\,\xd x_*, \qquad \mbox{for a.e.}~x\in\Om.
$$
By continuity of $v$ as a function of $\lambda\in[0,1]$, $\lambda \mapsto \essinf v_\lambda$ is also continuous. By contradiction, suppose that there exists $\mu \in(0,1]$ such that $\essinf v_\mu=0$. As in the proof of Proposition~\ref{p:poidsrappel}, modifying $v$ if necessary on a zero-measure subset of $\Om$, we can find $x\in\Om$ such that $v_\mu(x)=0$. We then intend to prove that $v_\mu=0$ almost everywhere in $\Om$, which will raise a contradiction. Consider a Lebesgue point $\xi$ for both $v_\mu$ and $\sigma_\mu$, and prove that $v_\mu(\xi)=0$. By strong connectivity, there exists $K\in\N^*$ and $x_0$, ..., $x_{K}$ Lebesgue points for $v_\mu$ and $\sigma_\mu$ satisfying $x_0=\xi$, $x_{K}=x$, and for any $0\leq k<K$, 
$$|\esupp \sigma(x_k,\cdot)|>0, \qquad x_{k+1} \in \esupp \sigma(x_k,\cdot).$$
Besides, for any $k$, we can write
$$S_\mu(x_{k+1}) v_\mu(x_{k+1})=\int_\Om\sigma_\mu(x_*,x_{k+1}) v_\mu(x_*)\,\xd x_*.$$
If we choose $k=K-1$, the left-hand side of the equality becomes $0$. Since $x_{K} \in \esupp \sigma(x_{K-1},\cdot)$, we can deduce that $v_\mu(x_{K-1})=0$. We conclude by descending induction on $k$ that $v_\mu(\xi)=v_\mu(x_0)=0$. This implies that $v_\mu=0$ almost everywhere, which is impossible: $v_\mu$ must span the one-dimensional space $\ker A_\mu^*$.

Thus, for any $\lambda$, $\essinf v_\lambda$ remains positive, which implies, in particular, that $v_1$ has the same (positive) sign as $v_0=e$ almost everywhere.

\subsubsection*{Case 2 -- Finite-dimensional case}

The proof in this case is simpler. First, the injectivity of $J_\lambda$ implies its bijectivity. Thanks, for instance, to the formula linking $J_\lambda^{-1}$ to the cofactor matrix of $J_\lambda$, $\lambda\mapsto  J_\lambda^{-1}$ is also analytic. Then $\Pi_\lambda= \id-J_\lambda^{-1} A_{\lambda}^*$ and $v_\lambda=\Pi_{\lambda} e $ inherit the required analyticity property with respect to $\lambda$. 

We prove by contradiction, as in Case~1, that all the coordinates of $v_\lambda$ are positive for any $\lambda\in [0,1]$. Assume that there exist an index $i^*$ and a real number $\mu \in (0,1]$ such that $(v_\mu)_{i^*}=0$. They can be chosen such that, for any $j$, and any $\lambda < \mu$, $(v_\lambda)j>0$, implying that $(v_\mu)_j\geq (v_\mu)_{i^*}=0$. It is possible to do so because $\lambda \mapsto (v_\lambda)_j$ is continuous on $[0,1]$ and $(v_0)_j=1/N>0$ for any $j$. 
Let $j^*\neq i^*$, by strong connectivity, there exist pairwise different indices $i_0=j^*$, $i_1$, $\ldots$, $i_r=i^*$ such that $(\sigma_\mu)_{i_ki_{k+1}}>0$ for all $k$. 

Moreover, since $A_{\mu}^* v_\mu=0$, we can write, for any $i$, 
\begin{equation} \label{e:relationker}
(v_\mu)_i=\Big({\dsp \sum_{j\neq i} (\sigma_\mu)_{ji} (v_\mu)_j}\Big)\Big{/}\Big(\sum_{j\neq i} (\sigma_\mu)_{ij}\Big). 
\end{equation}
Writing \eqref{e:relationker} for $i=i^*$, we deduce that 
$$\dsp \sum_{j\neq i^*} (\sigma_\mu)_{ji^*} (v_\mu)_j=0,$$
and it follows that $(v_\mu)_{i_{r-1}}=0$ since $(\sigma_\mu)_{i_{r-1}i^*}>0$. Then, by finite induction, applying successively \eqref{e:relationker} to $i=i_{r-1}$, $\ldots$, $i=i_{1}$, we eventually obtain $(v_\mu)_{j^*}=0$. The integer $j^*$ being arbitrary, 
it would imply that $v_\mu=0$, which is not possible. 

This ends the proof of Proposition~\ref{p:poidsrappel}. 
\end{proof}

\begin{remark}
It is interesting to provide an interpretation of $v$ in a social science
problem, for instance, when $y$ is an opinion vector of the population
regarding a binary question in a referendum. Assume that, for all $i$, $j$,
$\sigma_{ij}$ does not depend on $j$, \ie $\sigma_{ij} =\sigma_{i}$. Then
$\sigma_{i}$, which has the physical dimension of a frequency, can be seen as
the (uniform) persuasion force of the $i$-th agent on the population. 
First, we note that, for any $i$, $j$, $(A^\intercal)_{ij}= \sigma_j/N 
-\sigma_i \delta_{ij}$, where $\delta_{ij}=1$ if $i=j$ and $\delta_{ij}=0$
otherwise. Then $v$ satisfies, for any $i$, 
$$\sigma_i v_i = \frac 1N \sum_{j=1}^N \sigma_j v_j.$$
Since the null space of $A^\intercal$ is one-dimensional, the
previous equality ensures that $v$ is collinear to the vector
$(1/\sigma_1,\dots, 1/\sigma_N)^\intercal$. Therefore, for any $i$, 
the component $v_i$ of $v$ only depends on $1/\sigma_i$, and has the physical
dimension of time. More precisely, $v_i$ is proportional to the time period
in which agent $i$ interacts with the other agents in the population.
\end{remark}

\subsection{Weighted Hilbert structure on $\boldsymbol{X}$}
Proposition~\ref{p:poidsrappel} provides an element $v$ of $\ker A^*$ which satisfies $v>0$, \ie $v_i>0$ for any $i$ when $X=\R^N$ and $\essinf v >0$ if $X=L^2(\Om)$. This key property allows to build a new scalar product $\langle \cdot,\cdot\rangle_v$ and its associated norm $\|\cdot\|_v$ on $X$, weighted by $v$. The latter norm is equivalent to $\|\cdot\|$ because $v$ is lower and upper-bounded by positive constants. More precisely, when $X=\R^N$, we set
$$\langle y,z\rangle_v = \sum_{i=1}^N v_i y_i z_i, \qquad y,z\in\R^N,$$
and, when $X=L^2(\Om)$,
$$\langle y,z\rangle_v = \int_{\Om} y(x) z(x)\, v(x)\,\xd x, \qquad y,z\in L^2(\Om),$$
where $v(x)\,\xd x$ is an absolutely continuous probability measure. Note that the weighted mean can then be written in terms of weighted scalar product, \ie  
\begin{equation} \label{e:defmoypond}
\bar y^v=\langle y,v\rangle e=\langle y,e\rangle_v\, e.
\end{equation}
Noticing that $\|e\|_v=1$, $\bar y^v$ then appears as the orthogonal projection of $y$ on $\ker A$ with respect to the weighted scalar product. This leads us to identify the orthogonal complement of $\ker A$ with respect to this scalar product. In what remains, when dealing with notions related to the weighted scalar product, we shall add the index $v$, \textit{e.g.} $\perp_v$ for the orthogonality or $*_v$ for an  adjoint operator with respect to the weighted scalar product. For the sake of simplicity, we shall speak about $v$-scalar product, $v$-norm, $v$-orthogonality, $v$-adjoint.
\begin{proposition} \label{p:directsums}
The following properties hold:
\begin{enumerate}[(i)]
\item $(\ker A)^{\perp_v}=\im A$,
\item $\ker A^{*_v} = \ker A = \span e,\quad \im A^{*_v} = \im A$, 
\item $X=\ker A \oplov \im A=\ker A^* \oplor \im A$,
\item $\im A = \im A^2$,
\end{enumerate}
the direct sums in \textit{(iii)} being respectively $v$-orthogonal and orthogonal.
\end{proposition}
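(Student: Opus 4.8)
The plan is to establish the four claims of Proposition~\ref{p:directsums} in an order that lets each rely on the previous ones, using crucially that $v\in\ker A^*$ with $v>0$ and that $\|e\|_v=1$, together with the facts $\ker A=\ker A^2=\span e$ and $\ker A^*=\ker(A^*)^2$ (one-dimensional) already proved in Proposition~\ref{p:ptykerA-kerAstar}.

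\medskip

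\textbf{Step 1: prove (i).} The key observation is that the $v$-scalar product is designed precisely so that $A^*$ (the usual adjoint) relates to orthogonality of $\im A$. For $y\in X$ one computes $\langle Ay, e\rangle_v = \langle Ay, ve\rangle$; when $X=\R^N$ this is $\sum_i v_i (Ay)_i = \langle Ay, v\rangle = \langle y, A^*v\rangle = 0$ since $v\in\ker A^*$, and the $L^2(\Om)$ computation is identical with sums replaced by integrals. Hence $\im A \subseteq (\span e)^{\perp_v} = (\ker A)^{\perp_v}$. For the reverse inclusion I would use that $X = \ker A \oplus \im A$ as a (not necessarily orthogonal) direct sum — in finite dimension this is the rank-nullity theorem combined with $\ker A=\ker A^2$, and in infinite dimension it follows from the Fredholm alternative applied to $M_S^{-1}K-\id$ as in Step~1 of the proof of Proposition~\ref{p:ptykerA-kerAstar}, since $\ker A=\ker A^2$ forces the Riesz index to be $1$, so $X=\ker A\oplus\im A$ with $\im A$ closed. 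Since $\dim\ker A = \operatorname{codim}\im A = 1 = \dim(\span e)^{\perp_v}$-complement, a dimension/codimension count upgrades $\im A\subseteq(\ker A)^{\perp_v}$ to equality.

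\medskip

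\textbf{Step 2: deduce (iii) and (iv).} Given (i), the decomposition $X = \ker A \oplov \im A$ is immediate: $\ker A$ and its $v$-orthogonal complement $\im A$ sum to $X$ because $\im A$ is closed (finite codimension) and meets $\ker A$ trivially (if $w\in\ker A\cap\im A$ then $w\in\span e$ and $w\perp_v e$, so $\langle w,e\rangle_v=0$ forces $w=0$). For the second decomposition $X=\ker A^*\oplor\im A$, note $(\im A)^\perp = \ker A^*$ by the standard closed-range duality, and $\ker A^*\cap\im A=\{0\}$ since otherwise a nonzero vector would be orthogonal to itself; together with $\dim\ker A^*=1=\operatorname{codim}\im A$ this gives the orthogonal direct sum. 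Claim (iv), $\im A=\im A^2$, then follows: restricting $A$ to the invariant subspace $\im A$ (it is invariant because $A\ker A=0$ and $X=\ker A\oplus\im A$), the restriction is injective — indeed $\ker A\cap\im A=\{0\}$ from the previous line — hence surjective onto $\im A$ in finite dimension, and in infinite dimension $A_2:=A|_{\im A}$ is Fredholm of index $0$ (inherited from $A$) and injective, hence bijective, so $A(\im A)=\im A$, i.e. $\im A^2=\im A$.

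\medskip

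\textbf{Step 3: prove (ii).} Here I compute the $v$-adjoint explicitly. By definition $\langle A^{*_v}y, z\rangle_v = \langle y, Az\rangle_v = \langle y, v\,Az\rangle = \langle vy, Az\rangle = \langle A^*(vy), z\rangle$ (usual scalar product), so $v\,A^{*_v}y = A^*(vy)$, i.e. $A^{*_v} = M_{1/v}\,A^*\,M_v$ where $M_v$ is multiplication by $v$ (and $(1/v)$ is bounded since $\essinf v>0$, resp.\ $\min_i v_i>0$). Then $A^{*_v}y=0 \iff A^*(vy)=0 \iff vy\in\ker A^* = \span v$ (since $v$ itself spans $\ker A^*$) $\iff y\in\span e$; this gives $\ker A^{*_v}=\span e=\ker A$. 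For the range, $\im A^{*_v} = M_{1/v}(\im A^*)$; since $\im A^* = (\ker A)^\perp$ and $\ker A=\span e$, and one checks $M_{1/v}((\span e)^\perp) = (\span e)^{\perp_v} = \im A$ by the adjointness of $M_v$ and $M_{1/v}$ with respect to the two scalar products, we get $\im A^{*_v}=\im A$; alternatively, $\im A^{*_v}=(\ker A^{*_v})^{\perp_v}=(\ker A)^{\perp_v}=\im A$ directly from (i) and closed-range duality in the $v$-inner-product Hilbert space.

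\medskip

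I expect the main obstacle to be the rigorous treatment of closedness of $\im A$ and the codimension-one statements in the infinite-dimensional case: one must invoke the Fredholm alternative for $M_S^{-1}K-\id$ together with $\ker A=\ker A^2$ to conclude that the algebraic and topological complements coincide and that $\operatorname{codim}\im A = \dim\ker A^* = 1$. Once the functional-analytic bookkeeping (closed range, index zero, Riesz index one at $0$) is in place, every remaining step is an elementary identity — the placement of $v$ inside the inner product — that holds verbatim in both $\R^N$ and $L^2(\Om)$.
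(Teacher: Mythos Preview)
Your proof is correct, but takes a somewhat more laborious route than the paper's for parts (i) and (iv).

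For (i), rather than establishing the Riesz decomposition $X=\ker A\oplus\im A$ first and then upgrading the inclusion $\im A\subseteq(\ker A)^{\perp_v}$ by a codimension count, the paper observes that the two hyperplanes $(\ker A)^{\perp_v}$ and $(\ker A^*)^\perp$ are in fact \emph{the same set}: since $\ker A=\span e$ and $\ker A^*=\span v$, and $\langle z,e\rangle_v=\langle z,v\rangle$ for every $z\in X$, one has $(\ker A)^{\perp_v}=(\ker A^*)^\perp$ identically. Combined with closed range (so that $(\ker A^*)^\perp=\im A$), this gives (i) in one line. Your codimension argument is valid, but it front-loads the Fredholm/Riesz machinery that the paper defers.

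For (iv), your detour through ``$A_2$ is Fredholm of index $0$ inherited from $A$, hence injective implies surjective'' works but is unnecessary: once $X=\ker A\oplus\im A$ is in hand, simply apply $A$ to both sides to get $\im A=A(\ker A)+A(\im A)=0+\im A^2$. This is what the paper means by ``straightforwardly deduced from (iii)'', and it avoids having to justify the inheritance of the Fredholm index under restriction (which, as you have written it, is a little hand-wavy).

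Your Step~3 is fine; the alternative you mention at the end --- $\im A^{*_v}=(\ker A)^{\perp_v}=\im A$ by $v$-duality --- is exactly the paper's ``direct consequences of (i)''.
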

\begin{proof}
To obtain \textit{(i)}, let us first consider $y\in X$. Since $\langle Ay,e\rangle_v = \langle Ay,v\rangle=\langle y,A^*v\rangle=0$, then $Ay\in(\ker A)^{\perp_v}$. Conversely, if $z\in (\ker A)^{\perp_v}$, $\langle z,v\rangle=\langle z,e\rangle_v=0$, which ensures that $z\in (\ker A^*)^\perp$. Properties \textit{(ii)} are direct consequences of \textit{(i)}. The last equality in \textit{(iii)} is a well-known result which we recall here for the reader's convenience, and the previous one of course comes from \textit{(i)}. Eventually, \textit{(iv)} is straightforwardly deduced from \textit{(iii)}.
\end{proof}
\begin{remark}
Even if the previous proposition may suggest it, $A$ \textit{is not $v$-self-adjoint}. Indeed, let us consider the operator $D_v$ defined on $X$ by $D_v = \operatorname{diag} v$ if $X=\R^N$, and $D_v:z\mapsto vz$ if $X=L^2(\Om)$. We have
$$A^{*_v}={D_v}^{-1} A^* D_v.$$
If $A$ were $v$-self-adjoint, we would have $(D_vA)^*=D_vA$, which is not true in general. 
\end{remark}

\paragraph{Definition of $\boldsymbol{\pi}$ and $\boldsymbol{A_2}$.}
To conclude this section, we introduce the $v$-orthogonal projector $\pi$  on $\im A$, so that we can write $y-\bar y^v =\pi y$ for any $y\in X$, and the operator $A_2:\im A\to\im A$, $y\mapsto Ay$, which are well defined thanks to Proposition~\ref{p:directsums}. Indeed, since, for instance, $\ker A^*$ is stable by $A^*$, then $\im A= (\ker A^*)^{\perp}$ is stable by $A$. Moreover, thanks to the open mapping theorem for bounded linear operators in Banach spaces, $A_2$ is a homeomorphism of $\im A$.

\section{Proof of Theorems~\ref{t:poids}--\ref{t:convcons}} \label{s:convcons}
 
\subsection{Proof of Theorem~\ref{t:poids}}

Let us sum up the situation about the proof of Theorem~\ref{t:poids}. Proposition~\ref{p:ptykerA-kerAstar} ensures that $\ker A^*$ is a one-dimensional subspace of $X$. Then, in Proposition~\ref{p:poidsrappel}, we have proved with a homotopy argument that all elements of $\ker A^*$ have the same sign with respect to $1\leq i\le N$ or $x\in\Om$ (almost everywhere), and that there is a unique $v\in\ker A^*$ such that $v>0$ and $\langle v,e \rangle =1$. The latter property corresponds to the first part of Theorem~\ref{t:poids}. Hence, it remains to explain why the weighted mean $\bar y^v$ given in \eqref{e:defmoypond} remains constant along any solution of $\dot y(t) = Ay(t)$. In fact, it is a straightforward consequence of the definition of $v$, since
$$\frac{\xd \bar y^v}{\xd t}(t)=\langle \dot y(t),v\rangle\, e=\langle y(t),A^*v\rangle\, e = 0,$$
in finite as well as in infinite dimension. The proof of Theorem~\ref{t:poids} is then completed.

\subsection{Proof of Theorem~\ref{t:convcons} in finite dimension} \label{ss:thm2dimfinie}
The finite-dimensional case has already been studied in \cite{olsab-mur, web-the-mot}. We provide the proof for completeness and to prepare for the infinite-dimensional case. Following Proposition~\ref{p:ptykerA-kerAstar}, making a change of basis if necessary, $A$ can be written in block matrices
$$A=\begin{pmatrix} 0 & 0 \\ 0 & A_2 \end{pmatrix}.$$
We point out that 
$$y(t)-\bar y^v = e^{tA}(y(0)-\bar y^v) = e^{tA} \pi y(0) = e^{tA_2}\pi y(0), \qquad t\geq 0.$$
Besides, thanks to the Gershgorin circle theorem, as in \eqref{e:gershgorin}, we deduce that, apart from $0$, all eigenvalues of $A$ have a negative real part, \ie $A_2$ is a Hurwitz matrix. Considering that $\lambda_2$ is the second eigenvalue of $A$ (the one with the highest real part, apart from $0$), for any $\eps>0$, there exists $M(\eps)>0$ such that 
$$\| e^{tA_2} \| \leq M(\eps)\, e^{(\eps+\re \lambda_2)t}, \qquad t\geq 0.$$
The previous estimate gives the sharp exponential decay rate. 

\subsection{Proof of Theorem~\ref{t:convcons} in infinite dimension}
We already know from Proposition~\ref{p:ptykerA-kerAstar} that $0$ is a simple eigenvalue of $A$. Let us study the whole spectrum $\spec(A)$ of $A$, where the notation $\spec$ is not to be confused with the interaction function $\sigma$. We recall that $\spec(A)$ is the set of complex numbers $\lambda$ such that $A-\lambda \id$ is not bijective. 
\begin{proposition}\label{p:spectreA}
The spectrum of $A$ satisfies
$$\spec(A) \subset \{z\in \C~|~\re z \leq 0\}, \qquad \spec(A_2) =\spec(A)\backslash\{0\}\subset \{z\in \C~|~\re z <0\}.$$
\end{proposition}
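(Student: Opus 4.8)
The first inclusion, $\spec(A)\subset\{\re z\le 0\}$, should follow the same route as the finite-dimensional Gershgorin argument, transported to the operator setting via the decomposition $A=K-M_S$. The plan is to show that if $\re\lambda>0$ then $A-\lambda\,\id=K-(M_S+\lambda\,\id)$ is invertible. Since $\essinf S=\delta>0$, the multiplication operator $M_S+\lambda\,\id$ is boundedly invertible with $\|(M_S+\lambda\,\id)^{-1}\|\le 1/\delta$, so $A-\lambda\,\id=(M_S+\lambda\,\id)\bigl((M_S+\lambda\,\id)^{-1}K-\id\bigr)$ and it suffices to prove that $(M_S+\lambda\,\id)^{-1}K-\id$ is invertible, i.e.\ that $1$ is not in the spectrum of the compact operator $T_\lambda:=(M_S+\lambda\,\id)^{-1}K$. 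By the Fredholm alternative this amounts to showing $\ker(T_\lambda-\id)=\{0\}$, i.e.\ $\ker(A-\lambda\,\id)=\{0\}$. So the real content is: \emph{$A$ has no eigenvalue with positive real part}. For an eigenfunction $Az=\lambda z$ one writes $S(x)z(x)+ \lambda z(x)=\int_\Om\sigma(x,x_*)z(x_*)\,\xd x_*$, and since $\ker A^*$ is continuously embedded in $L^\infty$ by the same argument as in the excerpt (here using $\re(S(x)+\lambda)\ge\delta$), $z\in L^\infty(\Om)$; then evaluating at a point $x_0$ where $|z|$ is (essentially) maximal and taking absolute values gives $(\delta + \re\lambda)\,|z(x_0)| \le |S(x_0)+\lambda|\,|z(x_0)|\le \|z\|_{L^\infty}\esssup_x S(x) \le S(x_0)\|z\|_{L^\infty}$ after being slightly more careful — the clean inequality is $|S(x_0)+\lambda|\,\|z\|_{L^\infty}\le S(x_0)\|z\|_{L^\infty}$, forcing $|S(x_0)+\lambda|\le S(x_0)$, hence $\re\lambda\le 0$, and $\re\lambda<0$ unless $\lambda$ is purely imaginary; a small refinement (or the strong-connectivity propagation argument already used for $\ker A$) excludes nonzero purely imaginary eigenvalues and, more importantly, will be needed to get the \emph{strict} inequality for $\spec(A_2)$.

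\textbf{From $A$ to $A_2$.} Next I would relate $\spec(A_2)$ to $\spec(A)$. Using Proposition~\ref{p:directsums}, $X=\ker A\oplus\im A$ with $\im A$ invariant under $A$ (shown at the end of Section~\ref{s:AAstarv}), and $A$ acts as $0$ on $\ker A$ and as the homeomorphism $A_2$ on $\im A$. For $\lambda\neq 0$, $A-\lambda\,\id$ is invertible on $X$ if and only if $A_2-\lambda\,\id_{\im A}$ is invertible on $\im A$ (the $\ker A$ block contributes the invertible scalar $-\lambda$); hence $\spec(A_2)=\spec(A)\setminus\{0\}$, and $0\notin\spec(A_2)$ precisely because $A_2$ is a homeomorphism. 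Combined with the first inclusion this already gives $\spec(A_2)\subset\{\re z\le 0\}\setminus\{0\}$.

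\textbf{The strict inequality, and the main obstacle.} The delicate point — and the main obstacle — is upgrading $\re z\le 0$ to $\re z<0$ on $\spec(A_2)$, equivalently ruling out purely imaginary nonzero spectral values of $A$. Here one cannot merely argue about eigenvalues: since $A$ is a compact perturbation ($K$, Hilbert–Schmidt) of the multiplication operator $-M_S$, Weyl's theorem gives that the essential spectrum of $A$ equals that of $-M_S$, namely the essential range of $-S$, which lies in $\{\re z\le -\delta<0\}$; so on the imaginary axis (away from $0$) the spectrum of $A$ consists only of isolated eigenvalues of finite multiplicity. It therefore suffices to show $A$ has no eigenvalue on $i\R\setminus\{0\}$. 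This is exactly the maximum-principle/strong-connectivity argument: if $Az=i\beta z$ with $\beta\in\R$, $\beta\neq 0$, then at an essential maximum point $x_0$ of $|z|$ the identity $(S(x_0)+i\beta)z(x_0)=\int\sigma(x_0,x_*)z(x_*)\,\xd x_*$ forces, by the triangle inequality and $|z(x_*)|\le|z(x_0)|$, equality in the triangle inequality, so $z$ is constant in phase on $\esupp\sigma(x_0,\cdot)$ and $|S(x_0)+i\beta|=S(x_0)$, i.e.\ $\beta=0$ — contradiction. (One has to carry the argument along a strongly connected path exactly as in Step~2 of the proof of Proposition~\ref{p:ptykerA-kerAstar} and in Proposition~\ref{p:poidsrappel}, handling Lebesgue points; this is routine given those proofs but is the part that genuinely uses strong connectivity rather than just Gershgorin.) Putting the Weyl essential-spectrum statement together with the absence of eigenvalues on $i\R\setminus\{0\}$ yields $\spec(A)\cap i\R=\{0\}$, hence $\spec(A_2)\subset\{\re z<0\}$, completing the proof.
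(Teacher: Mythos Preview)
Your argument is correct but follows a genuinely different route from the paper. The paper works in the $v$-weighted inner product and computes the quadratic form
\[
Q(y)=\re\langle y,Ay\rangle_v=-\tfrac12\iint_{\Om^2} v(x)\sigma(x,x_*)\,|y(x_*)-y(x)|^2\,\xd x_*\,\xd x,
\]
using $v\in\ker A^*$ to symmetrize. This shows $A$ is dissipative in $\langle\cdot,\cdot\rangle_v$, and strong connectivity is invoked to prove $Q(y)=0\Rightarrow y\in\ker A$, so that $A_2$ is \emph{strictly} dissipative; the spectral inclusions then follow from Hille--Yosida. Your approach instead uses the decomposition $A=K-M_S$, the Fredholm alternative to reduce the spectrum on $\{\re\lambda\ge 0\}$ to eigenvalues, and a pointwise Gershgorin/maximum-principle estimate $|S(x_0)+\lambda|\le S(x_0)$ to exclude them. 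This is more elementary (no semigroup theory, no weight $v$ needed for the spectral part) and in fact slightly stronger than you realize: the disk $|w+S(x_0)|\le S(x_0)$ meets $i\R$ only at $0$, so your inequality already forces $\lambda=0$ when $\re\lambda=0$, and the strong-connectivity propagation you sketch is unnecessary here. Likewise the Weyl step is redundant, since your Fredholm reduction works for every $\lambda$ with $\re\lambda>-\delta$, covering the whole imaginary axis. What the paper's approach buys is the explicit dissipativity identity for $Q$, which is reused later (weighted-variance Lyapunov functional, Jurdjevic--Quinn stabilization); what yours buys is a self-contained proof of the proposition that isolates exactly where compactness of $K$ and $\essinf S>0$ enter.
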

\begin{proof}
Let us introduce a complex Hilbert structure on $X$, keep the same notations for the scalar products and the adjoints as in the real case, and consider the following quadratic forms respectively defined for $y\in X$ and $z\in\im A$ by
\begin{equation}\label{e:quadformC}
Q(y)=\left\langle y,\frac{A+A^{*_v}}2y\right\rangle_v = \re \left(\langle y,Ay\rangle_v\right), \qquad Q_2(z) = \re \left(\langle z,A_2z\rangle_v\right).
\end{equation}
Of course, $Q$ and $Q_2$ coincide on $\im A$. The equality between $\spec(A_2)$ and $\spec(A)\backslash\{0\}$ is then  straightforward with the $v$-orthogonal direct sum decomposition of $X$ as $\ker A \oplov \im A$. Let us now compute $Q$. We have, for any $y\in L^2(\Om;\C)$, 
$$\langle y,Ay\rangle_v = \iint_{\Om^2} v(x) \sigma(x,x_*) (y(x_*)-y(x)) \overline{y(x)}\,\xd x_* \,\xd x.$$
Applying \eqref{e:ptySker} to $v\in\ker A^*$, the previous equality can be rewritten as
$$\langle y,Ay\rangle_v = 
\iint_{\Om^2} v(x) \sigma(x,x_*) y(x_*) \overline{(y(x)-y(x_*))}\,\xd x_* \,\xd x.$$
Consequently, combining both previous expressions, we get
\begin{equation*}
Q(y)=\re \left(\langle y,Ay\rangle_v \right) = -
\frac 12 \iint_{\Om^2} v(x) \sigma(x,x_*) |y(x_*)-y(x)|^2\,\xd x_* \,\xd x,
\end{equation*}
and thus $Q$ is negative semi-definite. Let us now prove that $Q(y)=0$ if and only if $y\in \ker A$. 

If $Q(y)=0$, then $\sigma(x,x_*) |y(x_*)-y(x)|^2=0$ for almost all $x$ and $x_*$. Let $x_0$ a Lebesgue point for $y$. It is also possible to choose $x_0$ as a Lebesgue point of $\sigma$, \ie so that $x_*\mapsto \sigma(x_0,x_*)$ is defined almost everywhere in $\Om$. Consider now $X\in\Om$ some Lebesgue point for both $y$ and $\sigma$ at the same time, for which we intend to prove that $y(X)=y(x_0)$. By strong connectivity, there exist $i^*\in\N^*$ and $x_1,\ldots,x_{i^*}\in\Om$ Lebesgue points for both $\sigma$ and $u$ such that $x_{i^*}=X$, and for any $0\leq k\leq i^*-1$, 
$$|\esupp \sigma(x_k,\cdot)|>0, \qquad x_{k+1} \in \esupp \sigma(x_k,\cdot).$$
Since $\sigma(x,x_*) |y(x_*)-y(x)|^2=0$ for almost every $x$ and $x_*$, by taking $k=0$ in the previous property of $\sigma$, we get $y(x_*)=y(x_0)$ for almost every $x_* \in \esupp \sigma(x_0,\cdot)$. In particular, it holds for $x_*=x_1$, \ie  $y(x_1)=y(x_0)$. The conclusion $y(X)=y(x_{i^*})=\cdots=y(x_0)$ is then straightforward by induction. This ensures that $y$ is constant almost everywhere, \ie $y\in \span e=\ker A$.

Hence $A$ is dissipative and $A_2$ is strictly dissipative. We deduce from the Hille-Yosida theorem and one of its corollaries, see \cite[Chapter~1, Section~1.3, Theorem~3.1 \& Corollary~3.6]{pazy} for instance, that the resolvent set of $A$ contains the open right complex half-plan. This eventually leads to the required results of both spectra of $A$ and $A_2$ and ends the proof of Proposition~\ref{p:spectreA}.
\end{proof}
\begin{remark}
Of course, the result of Proposition~\ref{p:spectreA} also holds in finite dimension, and the associated quadratic form is then given by
$$Q(y) = -\frac 12 \sum_{i,j} v_i \sigma_{ij} |y_i-y_j|^2.$$
\end{remark}

In finite dimension, this is sufficient to conclude the proof of Theorem~\ref{t:convcons}, and we obtain $\rho=\re(\lambda_2)$, as noted in Section~\ref{ss:thm2dimfinie}. But, in infinite dimension, this is not enough: Proposition~\ref{p:spectreA} implies that $A_2$ is Hurwitz, but this property is however not sufficient to ensure that the corresponding semi-group is exponentially stable. We need to apply the spectral mapping theorem (see \cite{eng-nag, pazy}) and thus carefully study the spectral properties of $A_2$. Let us start with a particular easier case.

\subsubsection*{When $\boldsymbol{S}$ is constant}
Assume that $S(x)=\int_\Om \sigma(x,x_*)\,\xd x_*=\delta$ for almost every $x\in\Om$. Then $A=K-\delta \mathrm{Id}$, which ensures that $e^{tA}=e^{-\delta t}e^{tK}$.

When $\sigma$ is symmetric, the compact operator $K$ is also self-adjoint, implying that $K$ is diagonalizable with real eigenvalues. Consequently, $A_2$ is also diagonalizable, with negative eigenvalues. The convergence rate is then the Fiedler number $|\lambda_2|=-\lambda_2$.

When $\sigma$ is not symmetric, since $K$ is compact, its spectrum $\spec(K)$ contains $0$, and any element of $\spec(K)\backslash\{0\}$ is an eigenvalue with finite multiplicity. Consequently, $\spec(A)$ contains $-\delta$ and any element of $\spec(A)\backslash\{-\delta\}$ is an eigenvalue with finite multiplicity. 

\subsubsection*{General case}
We already know from Proposition~\ref{p:spectreA} that $\spec(A) \subset \{z\in \C~|~\re z \leq 0\}$. Moreover, since $A$ is a bounded operator, thanks to \cite[Corollary\,IV.1.4]{eng-nag} (for instance), $\spec(A)$ is also a compact subset of the closed disk centered at $0$ with radius $\|A\|$ in the complex plane.

Let us now use the fact that $A=K-M_S$ to study $\spec(A)$. First, since $K$ is compact, its spectrum $\spec(K)$ is countable, $0$ is the only possible accumulation point, and any nonzero element in the spectrum is an eigenvalue. Besides, from \cite[Proposition\,I.4.10]{eng-nag}, the spectrum of the multiplication operator $M_S$ is the essential range of $S$, \ie
$$
\spec(M_S)=\essran S 
= \left\{\lambda\in\C~|~ 
\operatorname{meas}(\{x\in\Om~|~|S(x)-\lambda|<\eps\})
\neq 0\,\,\,\,\forall\eps>0 \right\}.
$$
Recall that $\spec(A)$ is the disjoint union of the discrete $\spec_d(A)$ and essential $\spec_e(A)$ spectra of $A$ (see \cite{kato}). Because of the properties of $\spec(K)$, we have $\spec_e(A)=\spec_e(-M_S) \subset \essran(-S)$. Hence, $\spec_e(A)\subset\R$ and $$\sup \spec_e(A)\leq \sup \essran(-S) = -\essinf S = -\delta.$$ 
The discrete spectrum is the set of eigenvalues of $A$, but can also be seen as the subset of isolated points $\lambda$ of $\spec(A)$ such that their corresponding Riesz projector $P_\lambda$ is of finite rank. Recall (see \cite[Chapter~III, \textsection~6, Section~4, Theorem~6.17]{kato}) that
$$P_\lambda = \frac 1{2i\pi}\oint_\Gamma (z\id-A)^{-1}\xd z,$$
$\Gamma$ being a simple curve in the complex plane enclosing a region where $\lambda$ is the only element of $\spec(A)$. The Riesz projector $P_0$ associated to the eigenvalue $0$ is $\id-\pi$, which commutes with $A$. Thanks to the $v$-orthogonal direct sum decomposition from Proposition\,\ref{p:directsums}, we have
$$\spec_d(A_2) = \spec_d(A)\backslash\{0\}, \qquad \sup \essran A_2 \leq -\delta.$$
Moreover, for any $\eps\in(0,\delta)$, 
$$\spec_d(A_2)\cap \{z\in\C~|~-\delta+\eps\leq \re z<0\} = 
\spec(A_2)\cap \{z\in\C~|~-\delta+\eps\leq \re z<0\}.$$
Since $\spec(A_2)$ is compact, there is at most a finite number of elements in $\spec_d(A_2)\cap \{z\in\C~|~-\delta+\eps\leq \re z<0\}$. Hence, 
$$\sup \{\re z~|~z\in\spec_d(A_2) \}<0.$$
Consequently, the spectral bound $\mathrm{s}(A_2)=\sup \{\re z~|~z\in\spec(A_2) \}$ is negative. More precisely, if there is no eigenvalue of $A_2$ whose real part lies in $(-\delta,0)$, then $\mathrm{s}(A_2)=-\delta$. 
Otherwise, there exists $\lambda_2\in\spec_d(A_2)$ such that $\mathrm{s}(A_2)=\re \lambda_2$, $\lambda_2$ being one of the eigenvalues of $A_2$ with the highest real part. This allows to conclude on the exponential convergence towards consensus. Indeed, following \cite[Corollary\,IV.2.4]{eng-nag}, which is a consequence of the \textit{spectral mapping theorem} applied to the bounded operator $A_2$, $\mathrm{s}(A_2)$ coincides with the spectral growth of the semi-group generated by $A_2$. Therefore, $|\mathrm{s}(A_2)|$ is the convergence rate, and Theorem~\ref{t:convcons} is proved.

\section{\textcolor{black}{Further results}}\label{s:lyap}

In this section, we address three issues related to the previous analysis.
In the first subsection,
we consider a discrete-time version of \eqref{e:krause}--\eqref{e:initconddiscrete} (which is the linear version of the standard Hegselmann-Krause model \cite{heg-kra}) and discuss the convergence to consensus. In the second subsection, we clarify the relationships between the two problems studied in this article, namely \eqref{e:krause}--\eqref{e:initconddiscrete} and \eqref{e:diminfinie}--\eqref{e:diminfinieinitcond}: we prove that \eqref{e:diminfinie}--\eqref{e:diminfinieinitcond} can be obtained from \eqref{e:krause}--\eqref{e:initconddiscrete} by means of a rigorous limiting procedure as $N$ goes to $+\infty$. In the third subsection, we investigate the time asymptotics of both standard and $v$-weighted variances. In particular, we show that the weighted variance is an appropriate tool for studying the stability in the $L^2$-setting.

\subsection{Discrete-time setting}

In this subsection, we study a discrete-time version of \eqref{e:krause}--\eqref{e:initconddiscrete}, obtained by replacing time derivatives by difference quotients. Without loss of generality, we present our results in the finite-dimensional setting, but the analysis is similar in the infinite-dimensional one. 

Let $\dt>0$ and consider 
$\Gamma=(\gamma_{ij})\in\R^{N\times N}$ such that $\dt \, \Gamma$ is a stochastic 
matrix, \ie the sum of elements of each row equals $1$. We are here interested in the following problem
\begin{eqnarray}
\label{e:tempsdiscret}
y_i^{n+1} &=&\displaystyle\sum_{j=1}^N \gamma_{ij}\dt\,y_j^n, \quad 1\leq i\leq N, \quad n\in \N, \phantom{\int}\\
y_i^0&=& y_i^{\mathrm{in}},
\label{e:tempsdiscret_ic}
\end{eqnarray} 
where $y_i^n$ denotes the state variable of agent $i$ at discrete time $n\dt$. This problem is in fact the original consensus model studied in \cite{heg-kra}, but also in prior works \cite{degroot, lehrer}. The link to \eqref{e:krause}--\eqref{e:initconddiscrete} is quite clear. Indeed, we can write
$$y_i^{n+1}=\sum_{j\neq i} \gamma_{ij}\dt\,y_j^n + \gamma_{ii}\dt\,y_i^n
= \sum_{j\neq i} \gamma_{ij}\dt\,y_j^n + \left(1-\sum_{j\neq i} \gamma_{ij}\dt
\right)y_i^n,
$$
which implies
$$\frac{y_i^{n+1}-y_i^{n}}{\dt}=\sum_{j\neq i}\gamma_{ij}(y_j^n-y_i^n).$$
Hence the time-discrete problem \eqref{e:tempsdiscret} appears as the explicit 
Euler time discretization of \eqref{e:krause}. Note that the stochasticity of 
$\dt \, \Gamma$ implies that $\dt$ must satisfy the stability condition
$$\max_i \Big(\sum_{j\neq i} \gamma_{ij}\Big)\dt \leq 1.$$
For any $n\in\N^*$, we denote
$$
y^n=
\begin{pmatrix} y_1^n \\ \vdots \\ y^n_N \end{pmatrix}
\textrm{ and }
y^{\mathrm{in}}=
\begin{pmatrix} y_1^{\mathrm{in}} \\ \vdots \\ y^{\mathrm{in}}_N \end{pmatrix}.
$$
We have the following theorem , whose proof is a variant of the strategy developed in Sections~\ref{s:AAstarv} and \ref{s:convcons}, and thus is not provided. The matrix $A$ is defined by \eqref{eq:Adiscr}.

\begin{theorem} 
\label{t:poids-disc}
Assume that the graph associated to $\sigma$ is strongly connected. Then there exists a unique $v\in\ker A^*$ such that $v>0$ and $\langle v,e\rangle =1$, and the weighted mean of any solution $y^n$ to \eqref{e:tempsdiscret}--\eqref{e:tempsdiscret_ic} defined by $\bar y^v=\langle y^n,v\rangle\,e$ is constant with respect to $n$.
Moreover, there exist $\rho_*\in(0,1)$ and $M_*>0$ satisfying
$$
\|y^n-\bar y^v\| \leq M_*\, \|y^{\mathrm{in}}-\bar y^v\| \, \rho^n_*, \qquad \forall n\in\N.
$$
\end{theorem}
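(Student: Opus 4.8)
The plan is to transcribe the continuous-time argument of Sections~\ref{s:AAstarv}--\ref{s:convcons}, the flow $e^{tA}$ being replaced by the powers of the iteration matrix. Write $B := \dt\,\Gamma$. As noticed just above the statement, $B = \id + \dt\, A$ with $A$ the consensus matrix \eqref{eq:Adiscr} associated with $\sigma = \Gamma$ (the diagonal coefficients $\gamma_{ii}$ playing no role), and \eqref{e:tempsdiscret} reads $y^{n+1} = B\,y^n$, hence $y^n = B^n y^{\mathrm{in}}$. Since the graph of $\sigma$ is strongly connected, Theorem~\ref{t:poids} --- more precisely Proposition~\ref{p:poidsrappel} --- applies verbatim to this $A$ and produces a unique $v\in\ker A^*$ with $v>0$ and $\langle v,e\rangle=1$. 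From $A^*v = 0$ we get $B^*v = v$, so that $\langle y^{n+1},v\rangle = \langle B y^n, v\rangle = \langle y^n, B^*v\rangle = \langle y^n, v\rangle$: the weighted mean $\bar y^v = \langle y^n,v\rangle\, e$ is constant in $n$, which is the first assertion. Moreover $B\,\bar y^v = \langle y^{\mathrm{in}},v\rangle\, Be = \bar y^v$ because $Ae = 0$.

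For the decay estimate I would reuse the geometry of Proposition~\ref{p:directsums}: the $v$-orthogonal decomposition $X = \ker A \oplov \im A$ and the $v$-orthogonal projector $\pi$ onto $\im A$, for which $y - \bar y^v = \pi y$. Since $A$ annihilates $\ker A$ and stabilises $\im A$, one has $A\pi = \pi A$, hence $B = \id + \dt A$ commutes with $\pi$ and leaves $\im A$ invariant, so that
\begin{equation*}
y^n - \bar y^v = B^n(y^{\mathrm{in}} - \bar y^v) = B^n \pi y^{\mathrm{in}} = B_2^{n}\,\pi y^{\mathrm{in}}, \qquad n\in\N,
\end{equation*}
where $B_2 := \id + \dt\, A_2 : \im A \to \im A$. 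It is therefore enough to show that the spectral radius $r(B_2)$ is $<1$: Gelfand's formula then yields, for any fixed $\rho_* \in (r(B_2),1)$, a constant $M_*>0$ with $\|B_2^{n}\| \le M_* \rho_*^n$, and since $\|\pi y^{\mathrm{in}}\| = \|y^{\mathrm{in}} - \bar y^v\|$ the announced inequality follows at once.

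To bound $\spec(B_2)$, recall that after a suitable change of basis $A = \begin{pmatrix} 0 & 0 \\ 0 & A_2\end{pmatrix}$, so $\spec(A_2) = \spec(A)\backslash\{0\}$ and the eigenvalues of $B_2$ are exactly the numbers $1 + \dt\,\mu$ with $\mu\in\spec(A)\backslash\{0\}$. Fix such a $\mu$ and let $i$ be an index for which the Gershgorin estimate \eqref{e:gershgorin} holds; writing $s_i = \sum_{j\neq i}\gamma_{ij}$, it says that $\mu$ lies in the closed disk of centre $-s_i$ and radius $s_i$, equivalently $|\mu|^2 \le -2 s_i\,\re\mu$ (and $\re\mu<0$ since $\mu\neq0$). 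Hence
\begin{equation*}
|1 + \dt\,\mu|^2 = 1 + 2\dt\,\re\mu + \dt^2|\mu|^2 \le 1 + 2\dt\,(\re\mu)(1 - \dt\, s_i) = 1 - 2\dt\,|\re\mu|\,(1 - \dt\, s_i).
\end{equation*}
Under the stability condition in its strict form $\dt\,\max_k s_k < 1$ --- equivalently $\gamma_{kk}\dt = 1 - \dt s_k > 0$ for every $k$, i.e. $B$ aperiodic --- the factor $1 - \dt\, s_i$ is positive, so $|1 + \dt\mu| < 1$ for every nonzero eigenvalue $\mu$ of $A$, whence $r(B_2)<1$. Taking $\rho_*\in(r(B_2),1)$ finishes the argument; the infinite-dimensional case is handled in the same spirit, replacing the spectral-radius estimate by the spectral mapping theorem for the bounded operator $B_2$, exactly as in the proof of Proposition~\ref{p:spectreA}.

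I expect the \emph{main obstacle} to be precisely this last spectral step, and in particular the fact that the non-strict stability condition recalled before the statement is not sufficient on its own: if $\dt s_i = 1$ for some $i$, the stochastic matrix $B$ may be periodic (think of a permutation matrix), which produces unimodular eigenvalues distinct from $1$ and prevents convergence to consensus, so one genuinely needs $\dt$ small enough, equivalently all $\gamma_{ii}>0$. Everything else is a routine transcription of the continuous-time proof.
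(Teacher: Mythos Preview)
Your proposal is precisely the ``variant of the strategy developed in Sections~\ref{s:AAstarv}--\ref{s:convcons}'' that the paper invokes in lieu of a proof: you replace the semigroup $e^{tA}$ by powers $B^n$ of the iteration matrix, reuse the same block decomposition $\ker A\oplov\im A$, and trade the Hurwitz property of $A_2$ for the spectral-radius bound $r(B_2)<1$, obtained via the very Gershgorin estimate~\eqref{e:gershgorin} used in Section~\ref{ss:thm2dimfinie}. There is nothing to compare---you have written out exactly what the paper leaves to the reader.

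Your closing remark about the missing hypothesis is also correct and worth recording: as stated, the theorem fails without some aperiodicity of $B=\dt\,\Gamma$ (your permutation-matrix counterexample is decisive), and your Gershgorin computation shows that the strict stability condition $\dt\max_i s_i<1$, equivalently $\gamma_{ii}>0$ for every $i$, suffices. One could relax this to a single positive diagonal entry by applying Perron--Frobenius directly to the irreducible stochastic matrix $B$, but that is a refinement, not a gap in your argument.
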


\subsection{Kinetic limit}

In order to study the kinetic limit of \eqref{e:krause}--\eqref{e:initconddiscrete} as $N$ goes to $+\infty$, we consider the family of (constant) functions $x_i: \ \R^+ \to \Om$, $1\leq i\leq N$, and write \eqref{e:krause}--\eqref{e:initconddiscrete} as an artificial second-order model, which is reminiscent of the classical Cucker-Smale model \textcolor{black}{\cite{cuc-sma}}: 
$$\dot x_i(t) = 0, \qquad \dot \xi_i(t) = \frac{1}{N} \sum_{j} \sigma_{ij} (\xi_j(t)-\xi_i(t)).$$
The functions $x_i$ being constant for each agent, we can re-write the interactions kernels $\sigma_{ij}$ between the agents $i$ and $j$ by introducing a continuous kernel $\sigma\ : \ \bar\Omega^2\to \R^+_*$ satisfying
$$\sigma(x_i,x_j) = N\sigma_{ij}.$$
We hence end up with
\begin{equation} \label{e:system}
\dot x_i(t)=0, \qquad \dot \xi_i(t)=\frac{1}{N} \sum_{j} \sigma(x_i,x_j) (\xi_j(t)-\xi_i(t)). 
\end{equation}
\begin{theorem}
Passing to the kinetic limit when $N$ goes to $+\infty$ gives a probability measure on $\Om^2$
$$\mu(t) = f(t,x,\xi)\, \xd x\, \xd \xi$$
solution of
\begin{equation} \label{e:chocolat}
\partial_t\mu + \operatorname{div}_\xi(X[\mu]\mu)=0
\end{equation}
or equivalently, $\partial_t f + \operatorname{div}_\xi(fX[f]) = 0$, where $\operatorname{div}_\xi$ is the divergence with respect to $\xi$, $X$ is the vector field defined by
$$X[\mu](x,\xi) = \iint_{\Om^2} \sigma(x,x_*) (\xi_*-\xi) \, \frac{1}{F(x_*)} \, \xd\mu(x_*,\xi_*),  
$$
and 
$$F(x) = \int_{\Om} \xd\mu(t,x)(\xi) = \int_\Om f(t,x,\xi)\, \xd \xi
$$
is the density marginal, which does not depend on $t$.

Moreover, the relationship between \eqref{e:chocolat} and the infinite-dimensional problem \eqref{e:pbdiminfinie} is that the function
$$
y(t,x) =\frac 1{F(x)}\int_{\Om} \xi \, f(t,x,\xi)\, \xd \xi = 
\frac{\int_{\Om} \xi \, f(t,x,\xi)\,\xd \xi}{\int_{\Om} f(t,x,\xi)\, \xd \xi}
$$
is the solution of \eqref{e:pbdiminfinie} (with the corresponding initial condition).
\end{theorem}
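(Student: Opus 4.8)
The statement splits into two essentially independent parts, which I would treat separately, starting with the elementary ``Moreover'' part because it pins down the normalisations needed for the kinetic limit. The one non-routine feature is the weight $1/F(x_*)$ in $X[\mu]$, and it comes from the discretisation. If the labels $x_1^N,\dots,x_N^N\in\Om$ are chosen so that $\frac1N\sum_i\delta_{x_i^N}\rightharpoonup F(x)\,\xd x$ weak-$*$, with $F$ continuous and bounded below by a positive constant, then the correct discretisation of \eqref{e:diminfinie} is $\sigma_{ij}=\frac1N\,\sigma(x_i^N,x_j^N)/F(x_j^N)$, because $\frac1N\sum_j\sigma(x_i^N,x_j^N)F(x_j^N)^{-1}\varphi(x_j^N)$ is then a quadrature approximation of $\int_\Om\sigma(x_i^N,x_*)\varphi(x_*)\,\xd x_*$; when $F\equiv1$, i.e.\ when the labels equidistribute the Lebesgue measure, this is exactly the relation $\sigma(x_i^N,x_j^N)=N\sigma_{ij}$ of \eqref{e:system}. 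I would take this as the working hypothesis, and let $X[\cdot]$ denote the operator built with this \emph{fixed} density $F$.

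\emph{Moment identity.} Put $m(t,x)=\int\xi\,f(t,x,\xi)\,\xd\xi$ (or, when $\mu(t)$ has no $\xi$-density, the first moment of its disintegration with respect to its $x$-marginal), so that $m=F\,y$ with $y$ as in the statement. A consensus system $\dot\xi_i=\sum_j c_{ij}(\xi_j-\xi_i)$ with $c_{ij}\geq0$ obeys the maximum principle, hence every $\xi_i(t)$ — and so the $\xi$-support of $f(t,\cdot,\cdot)$, uniformly on $[0,T]$ — stays in the fixed compact interval $[\min_j\xi_j^{\mathrm{in}},\max_j\xi_j^{\mathrm{in}}]$; all integrals below converge and integrations by parts in $\xi$ leave no boundary term. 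Testing $\partial_t f+\operatorname{div}_\xi(fX[f])=0$ against $\xi\,\psi(x)$ (admissible thanks to the compact velocity support) and integrating by parts in $\xi$ gives $\partial_t m(t,x)=\int f(t,x,\xi)X[f](x,\xi)\,\xd\xi$ weakly in $x$. Inserting the definition of $X[f]$, applying Fubini, and using $\int f(t,x,\xi)\,\xd\xi=F(x)$, $\int\xi f(t,x,\xi)\,\xd\xi=m(t,x)$ together with the analogous identities in $x_*$, the triple integral collapses: the factor $1/F(x_*)$ cancels the $x_*$-marginal, and one finds $\partial_t m(t,x)=F(x)\int_\Om\sigma(x,x_*)\big(y(t,x_*)-y(t,x)\big)\,\xd x_*$. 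Since $F$ is independent of $t$, $\partial_t m=F\,\partial_t y$, and dividing by $F(x)$ — the only place where $F>0$ is used — yields $\partial_t y=Ay$, that is \eqref{e:pbdiminfinie}; choosing $\xi_i^{\mathrm{in}}=y^{\mathrm{in}}(x_i^N)$ makes the initial datum $y^{\mathrm{in}}$, and the kinetic distribution is then the monokinetic measure $\mu(t)=\delta_{\xi=y(t,x)}\otimes F(x)\,\xd x$.

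\emph{Kinetic limit.} With the above normalisation, \eqref{e:system} reads $\dot x_i=0$, $\dot\xi_i=\frac1N\sum_j\sigma(x_i^N,x_j^N)F(x_j^N)^{-1}(\xi_j-\xi_i)$. A chain-rule computation shows that the empirical measure $\mu^N(t)=\frac1N\sum_i\delta_{(x_i^N,\xi_i(t))}$ on $\bar\Om\times\R$ is a distributional solution of $\partial_t\mu^N+\operatorname{div}_\xi(X[\mu^N]\mu^N)=0$. The labels lie in the compact $\bar\Om$ and, by the maximum principle, the velocities lie in a fixed compact interval, so the family $\{\mu^N(t):N\in\N^*,\ t\in[0,T]\}$ is tight; moreover $t\mapsto\mu^N(t)$ is equi-Lipschitz for a Wasserstein-type distance because $X[\mu^N]$ is uniformly bounded. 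By Prokhorov and an Arzel\`a--Ascoli argument, a subsequence converges, uniformly on $[0,T]$ for the weak-$*$ topology, to some $\mu$; since the kernel $(x,\xi,x_*,\xi_*)\mapsto\sigma(x,x_*)F(x_*)^{-1}(\xi_*-\xi)$ is continuous and bounded on the relevant compact set, $X[\mu^N]\to X[\mu]$ uniformly there, and passing to the limit in the weak formulation shows that $\mu$ solves \eqref{e:chocolat}. Its $x$-marginal is the weak-$*$ limit $F(x)\,\xd x$ of the $x$-marginals of the $\mu^N$ (so $X[\mu]$ agrees with the operator of the statement, built from that marginal), and it is $t$-independent — either because $\dot x_i=0$ for every $N$, or by integrating \eqref{e:chocolat} in $\xi$, the flux integrating to $0$ by the compact velocity support. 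For convergence of the whole sequence one may instead invoke a Dobrushin/Neunzert-type stability estimate for \eqref{e:chocolat}, available because the confined kernel is Lipschitz when $\sigma$ and $F$ are, which also gives uniqueness of the limiting measure.

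\emph{Main obstacle.} The moment computation is routine once the velocity support is known to be confined. The genuine work is the mean-field limit: choosing the normalisation that produces the weight $1/F(x_*)$, and then the compactness/stability package — tightness, equicontinuity of $t\mapsto\mu^N(t)$, passage to the limit in the weak formulation, and, if uniqueness of the kinetic solution is wanted, the Dobrushin/Neunzert argument. Everything downstream of stability of the weak formulation is classical.
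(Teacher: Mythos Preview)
Your proposal is correct and follows the paper's approach closely. The moment computation---integrating by parts in $\xi$ against the test function $\xi$, then collapsing the triple integral via Fubini and the identities $\int f\,\xd\xi=F$, $\int\xi f\,\xd\xi=Fy$, with the factor $1/F(x_*)$ cancelling the $x_*$-marginal---is exactly what the paper does, only you organise it as computing $\partial_t m$ first and then dividing, whereas the paper divides by $F$ from the outset; the algebra is identical.

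The differences are of emphasis rather than substance. For the mean-field limit the paper is terse: it invokes the empirical-measure/pushforward formulation $\mu(t)=\Phi(t)_\sharp\mu(0)$ and defers the details to Piccoli--Rossi--Tr\'elat. Your outline (tightness from the maximum principle, equi-Lipschitz in Wasserstein, Prokhorov plus Arzel\`a--Ascoli, then Dobrushin stability for uniqueness) is the standard package behind that citation, so you are spelling out what the paper leaves to the reference. Your treatment of the $1/F$ weight is also slightly more general: the paper implicitly takes equidistributed labels (so $F\equiv1$ at the discrete level, with $1/F$ appearing only once one allows a non-monokinetic kinetic measure), whereas you allow a general limiting marginal $F$ from the start by building it into the discretisation $\sigma_{ij}=\tfrac1N\sigma(x_i^N,x_j^N)/F(x_j^N)$. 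That is a clean way to see where $1/F$ comes from and is consistent with the paper's remark that the absolutely-continuous assumption can be dropped by disintegration. Your observation about the monokinetic ansatz $\mu(t)=\delta_{\xi=y(t,x)}\otimes F(x)\,\xd x$ is correct but not needed for the stated result.
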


In the statement above, we have assumed $\mu(t)$ to be absolutely continuous to simplify the expression of the vector field $X[\mu]$. But it can be generalized without difficulty, by disintegrating $\mu(t)$ with respect to its marginal. We leave the details to the reader.

\begin{proof}
Passing to the kinetic limit can be done as in the usual Cucker-Smale model: this is done in detail in \cite[Section~2.3]{piccoli2015control2}. In a few words, one passes from the kinetic model to the finite-dimensional model by taking empirical measures
$$
\mu(t) = \frac{1}{N}\sum_{i=1}^N \delta_{(x_i(t),\xi_i(t))} .
$$ 
Conversely, the unique solution of the kinetic equation is $\mu(t) = \Phi(t)_\sharp \mu(0)$, that is the pushforward  of the initial measure under the flow $\Phi(t)$ generated by the vector field $(0,X)^\intercal$. The fact that $F$ does not depend on $t$ immediately follows by integrating \eqref{e:system} with respect to $\xi$.

Let
$$
y(t,x) = \frac 1 {F(x)}\int_{\Omega} \xi \, f(t,x,\xi)\, \xd\xi.
$$ 
We deduce that $y(t)$ is solution of \eqref{e:pbdiminfinie}. Indeed,
\begin{eqnarray*}
\partial_t y(t,x) &=& \displaystyle - \frac{1}{F(x)}\int_\Om \xi\, \operatorname{div}_\xi(f X[f])\, \xd \xi = \frac{1}{F(x)}\int_\Om f X[f]\, \xd \xi \\[10pt]
&=&  \displaystyle \frac{1}{F(x)}\int_\Om f(t,x,\xi) \iint_{\Om^2} \sigma(x,x_*) (\xi_*-\xi) \, \frac{f(t,x_*,\xi_*)}{F(x_*)} \, \xd x_*\,\xd \xi_* \\[10pt]
&=&  \displaystyle \frac{\int_\Om f(t,x,\xi)\, \xd \xi}{F(x)} \int_\Om \sigma(x,x_*) \frac{\int_\Om v_* f(t,x_*,\xi_*)\, \xd \xi_*}{F(x_*)}\, \xd x_* \\[10pt]
&&  \displaystyle \qquad\qquad\qquad - \frac{\int_\Om \xi\, f(t,x,\xi)\, \xd \xi}{F(x)} \int_\Om \sigma(x,x_*) \frac{\int_\Om f(t,x_*,\xi_*)\, \xd \xi_*}{F(x_*)} \, \xd x_* \\[10pt]
&=&  \displaystyle \int_\Om \sigma(x,x_*) ( y(t,x_*)-y(t,x) )\, \xd x_*,
\end{eqnarray*}
and the result follows.
\end{proof}

\begin{remark}
The exponential convergence of $y(t)$ to $\bar y^v$ when $t\to +\infty$ can be interpreted in the kinetic setting by saying that 
the measure $\mu(t,x)$ converges (vaguely) towards the Dirac measure $\delta_{\bar y^v}$.
\end{remark}

\subsection{Lyapunov functionals and time stabilization}

In this subsection, we present two methods to find a Lyapunov functional for our problems, allowing to recover the convergence towards consensus property with an exponential decay. The first one uses the classical Lyapunov lemma, and the second one involves a weighted variance. Eventually, we present an application of this functional to design a Jurdjevic-Quinn-type stabilizing control. It can also be used to discuss stability properties under perturbations of our system (such as nonlinearities, noises). 

\subsubsection{Using the Lyapunov lemma}

In finite dimension, since $A_2$ is a Hurwitz matrix, there exists (see, for instance, \cite{Khalil}) a unique matrix $P\in \R^{(N-1)\times (N-1)}$, symmetric positive definite, defined on $\im A$, such that
$$PA_2+A_2^\intercal P = -\I_{N-1}.$$
In infinite dimension, since $A_2$ is a homeormorphism on $\im A$ which is strictly dissipative and generates an exponentially stable semi-group, it follows from  \cite[Theorem~5.1.3]{cur-zwa} that there exists a unique bounded self-adjoint positive definite operator $P$ defined on $\im A$ such that 
$$PA_2+A_2^* P = -\id_{\im A},$$
In both cases, $P$ is given by
$$P=\int_0^{+\infty} e^{tA_2^*} e^{tA_2}\,\xd t. $$
This operator $P$ induces a norm on $\im A$, given by $\|z\|_P=\langle z,Pz\rangle=\|P^{1/2}z\|$. We define the Lyapunov functional on $\im A$, for any $z\in\im A$, as 
$$\var_P(z)=\langle z,Pz\rangle.$$
We call it the variance associated with $P$. When $X=\R^N$, we can choose $\lambda_{\max}>0$ as the highest eigenvalue of $P$. When $X=L^2(\Om)$, we notice that 
$$\|z\|^2=\langle z,z\rangle = -2 \langle z, A_2^*Pz\rangle
\leq 2\|z\| \|A_2^*\|\|P^{1/2}\|\|P^{1/2}z\|, \qquad \forall z\in\im A.$$
This implies that, in both cases, there exists $\lambda_{\max}>0$ such that $\var_P(z)\leq \lambda_{\max} \|z\|^2$ for any $z\in\im A$. 

In order to recover the exponential convergence of a solution $y$ of \eqref{e:pbdimfinie} or \eqref{e:pbdiminfinie} towards consensus, recalling that $\bar y^v \in\ker A$ and $\pi y = y-\bar y^v\in \im A$, $z=\pi y$ satisfies $\dot z = \dot y = A(z+\bar y^v) = A_2z$. Then we introduce $V_P:t\mapsto \var_P(\pi y(t))$, so that 
$$\dot V_P =\langle\dot z, Pz\rangle + \langle z, P \dot z\rangle
= \langle z, (A_2^* P + PA_2) z\rangle=-\|z\|^2\leq -\frac 1{\lambda_{\max}} V_P,$$
which ensures the required exponential convergence. This argument suffices to prove exponential convergence, but not to obtain the sharp convergence rate stated in Theorem~\ref{t:convcons}. We mention the recent  paper \cite{arn-jin-woh} for techniques to design Lyapunov functionals achieving that sharp rate.

\subsubsection{Weighted variance}

We propose an alternative variance, based on the geometric properties of our problem, and involving the weight $v$ built in Theorem~\ref{t:poids}. In the weighted scalar product framework on $X$, we define the weighted expectation
$$\exv[y] = \langle y,v\rangle=\langle y,e\rangle_v= \left\{
\begin{array}{ccl}
\displaystyle \sum_i v_i y_i & \mbox{if } & X=\R^N, \\
\displaystyle \int_\Om v(x) y(x)\,\xd x & \mbox{if } & X=L^2(\Om).
\end{array} \right. $$
It is clear that $\bar y^v=\exv[y] e$. Then we define the weighted variance of $y\in X$ as 
$$\var_v y = \exv\left[ (y-\exv[y])^2\right] = \exv[y^2]-\exv[y]^2
= \|y-\bar y^v\|_v^2=\|y\|_v^2-\|\bar y^v\|_v^2=\|\pi y\|_v^2.$$
When $X=\R^N$, we have
\begin{equation*}
\var_v y = \sum_{i} v_i (y_i-\langle y,e\rangle_v)^2
=\sum_{i} v_i y_i^2 - \langle y,e\rangle_v^2=\frac12 \sum_{i,j} v_iv_j(y_i-y_j)^2,
\end{equation*}
and when $X=L^2(\Om)$, 
\begin{multline*}
\var_v y = \int_\Om v(x) (y(x)-\bar y^v)^2\,\xd x=
\int_\Om v(x) y(x)^2\,\xd x-(\bar y^v)^2\\
=\frac12 \iint_{\Omega^2} v(x) v(x_*) (y(x)-y(x_*))^2\,\xd x_*\,\xd x.
\end{multline*}

Setting $V_v:t\mapsto \var_v(y(t))$, where $y$ solves \eqref{e:pbdimfinie} or \eqref{e:pbdiminfinie}, this weighted variance can be used as a Lyapunov functional. Indeed, let us study the monotonicity of $V_v$. Remembering that $A^{*_v}\bar y^v=0$, we have
$$\dot V_v = 2 \langle y-\bar y^v, \dot y\rangle_v
=2 \langle y-\bar y^v, Ay\rangle_v =2\langle y,Ay\rangle_v =2Q(y)=2Q_2(\pi y),$$
where $Q$ and $Q_2$ are the real Hilbert versions of the quadratic forms with the same names defined in \eqref{e:quadformC} in a complex Hilbert structure. Recall that 
$$Q(y)=\frac12\left\{ \begin{array}{ll}
\displaystyle\sum_{i,j} v_i\, \sigma_{ij} (y_j-y_i)^2 & \mbox{in finite dimension}, \\
\displaystyle\iint_{\Omega^2} v(x)\sigma(x,x_*) (y(x)-y(x_*))^2\,\xd x_*\,\xd x
& \mbox{in infinite dimension}.
\end{array}\right.$$
We also already know that $A$ and $A_2$ are generally not self-adjoint (except if $\sigma$ is symmetric), and respectively dissipative and strictly dissipative with respect to the weighted scalar product (but not to the standard scalar product). Dissipativity holds thanks to the strong connectivity assumption, as explained in the previous section. This ensures that the weighted variance strictly decreases if $y^\mathrm{in}\neq\bar y^v$. 

Then we can obtain the convergence towards to consensus thanks to the LaSalle invariance principle, see, for instance, \cite{hal69, har91, Khalil, wal80}. In finite dimension, the convergence of $V_v$ towards $0$ is then obtained because the invariant set of the differential system in the LaSalle sense is $\ker A$. Consequently, the only possible accumulation point of the trajectory is the consensus, which implies that $V_v$ converges towards $0$ when $t$ goes to $+\infty$. In infinite dimension, we also recover the convergence of $V_v$ to $0$, provided that the orbits of the system are precompact (which is true indeed, the proof is not presented here because it does not yield anything new with respect to Theorem~\ref{t:convcons}).

\medskip

\begin{remark}
The variances $\var_P$ and $\var_v$ can be respectively expressed in terms of $P$ or $D_v$, \ie for any $z\in\im A$, we have
$$\var_P(z)=\langle z,Pz\rangle, \qquad \var_v(z)=\langle z,D_v z\rangle.$$
This means that, in the same way the Lyapunov lemma provides a self-ajoint positive definite operator $P$ such that $PA_2+A_2^*P=\id_{\im A}$, we have proved here that we can find a multiplicative (diagonal) positive operator $D_v$ such that $D_v A + A^* D_v$ is dissipative (and strictly on $\im A$). 
\end{remark}

\subsubsection{Applications}

\paragraph{Jurdjevic-Quinn stabilization.}
One of the interests of Lyapunov functionals is the possibility to speed up the convergence towards consensus by adding a control (see \cite{caponigro2017mean, caponigro2017JQ}) designed by the Jurdjevic-Quinn method (see \cite{jur-qui-78}). Let $u:\R^+\to X$ be a control function, and consider the Cauchy problem
$$\dot y(t) = Ay(t) +u(t), \qquad y(0)=y^\mathrm{in}.$$
Since $\bar y^v=(\id-\pi)y$, we have
$$\frac{\xd \bar y^v}{\xd t} =(\id-\pi)Ay+(\id-\pi)u.$$
We study again $V_P:t\mapsto \langle\pi y,P\pi y\rangle$ (or $V_v$). Setting $z=y-\bar y^v$, we have
$$\dot z = Ay +u -(\id-\pi)Ay-(\id-\pi)u=\pi Ay +\pi u= A_2z+\pi u.$$
Then
$$\dot V_P =-\|z\|^2+2\langle \pi u,Pz\rangle \leq 
-\frac 1{\lambda_{\max}}V_P+2\langle \pi u,Pz\rangle.$$
We are led to choose $u=-\alpha \pi y\in \im A$, $\alpha>0$, so that 
$$\dot V_P \leq -\left(\frac 1{\lambda_{\max}}+2\alpha\right)V_P,$$
which then arbitrarily improves the convergence rate towards consensus. 

\paragraph{Robustness under a class of nonlinear perturbations.}
Another interest of the Lyapunov functionals consists in ensuring exponential convergence under some nonlinear perturbations. Let $f:X\rightarrow X$ be a function of class $C^1$, locally bounded, satisfying $\langle e,f(y) \rangle_v = \langle e,f(y) \rangle = 0$ and the dissipativity property $\langle y,f(y) \rangle_v \leq 0$ for every $y\in X$. The first property implies that $f(X)\subset\im A$; in finite dimension, it means that $\sum_i f_i(y)=0$. We consider the Cauchy problem
$$\dot y(t) = Ay(t) +f(y(t)), \qquad y(0)=y^\mathrm{in}.$$
Since $\im A=(\ker A)^{\perp_v}$, it follows from the first property of $f$ that
$$\frac{\xd \bar y^v}{\xd t} = \langle Ay+f(y),e \rangle_v~e = 0 .$$
Hence the weighted mean remains constant. Then, thanks to the dissipativity property, 
$$\dot V_v = 2 \langle y-\bar y^v, Ay+f(y)\rangle_v 
\leq 2Q_2(\pi y)-2 \langle \bar y^v, f(y)\rangle_v = 2Q_2(\pi y),$$
which implies that the above Cauchy problem is globally well posed and that the solution converges exponentially to consensus.

\section{Numerical illustrations} \label{s:num}

In this section, we present some numerical simulations in the finite-dimensional case, where the non-symmetric interaction matrix $(\sigma_{ij})$ satisfies, or not, connectivity properties. We investigate three different situations. In the first one, the population is ``fully connected'', \ie all non-diagonal coefficients of $(\sigma_{ij})$ are chosen positive. The second situation fits in the one we investigated in this work, \ie the graph associated to $(\sigma_{ij})$ is assumed to be strongly connected. We shall then refer to a ``strongly connected'' population. In our third situation, we focus on a population in which only some subgroups of the population are ``strongly connected'', the population itself being ``partially connected''. We deal with a population of $N=100$ individuals, and we focus on the collective dynamics. For the numerical simulations, we used a standard RK4 routine to solve \eqref{e:krause}--\eqref{e:initconddiscrete}.

The initial state of the population is by means of a random sampling between $0$ and $1$. 

\begin{figure}[!ht]
\begin{center}
\psfrag{State functions}{\hspace{-.2cm}{\tiny State functions}}
\psfrag{Time}{{\tiny Time}}
\psfrag{0.1}{{\tiny $\!0.1$}}
\psfrag{0.2}{{\tiny $\!0.2$}}
\psfrag{0.3}{{\tiny $\!0.3$}}
\psfrag{0.4}{{\tiny $\!0.4$}}
\psfrag{0.5}{{\tiny $\!0.5$}}
\psfrag{0.6}{{\tiny $\!0.6$}}
\psfrag{0.7}{{\tiny $\!0.7$}}
\psfrag{0.8}{{\tiny $\!0.8$}}
\psfrag{0.9}{{\tiny $\!0.9$}}
\psfrag{1}{{\tiny $\!1$}}
\psfrag{0.05}{{\tiny $0.05$}}
\psfrag{0.15}{{\tiny $0.15$}}
\psfrag{0}{{\tiny $\!0$}}
\psfrag{Logarithm of the variances}{\hspace{.75cm}{\tiny Variance}}
\psfrag{standard variance}{ {\tiny standard}}
\psfrag{weighted variance}{ {\tiny weighted}}
\includegraphics[width=7.3cm]{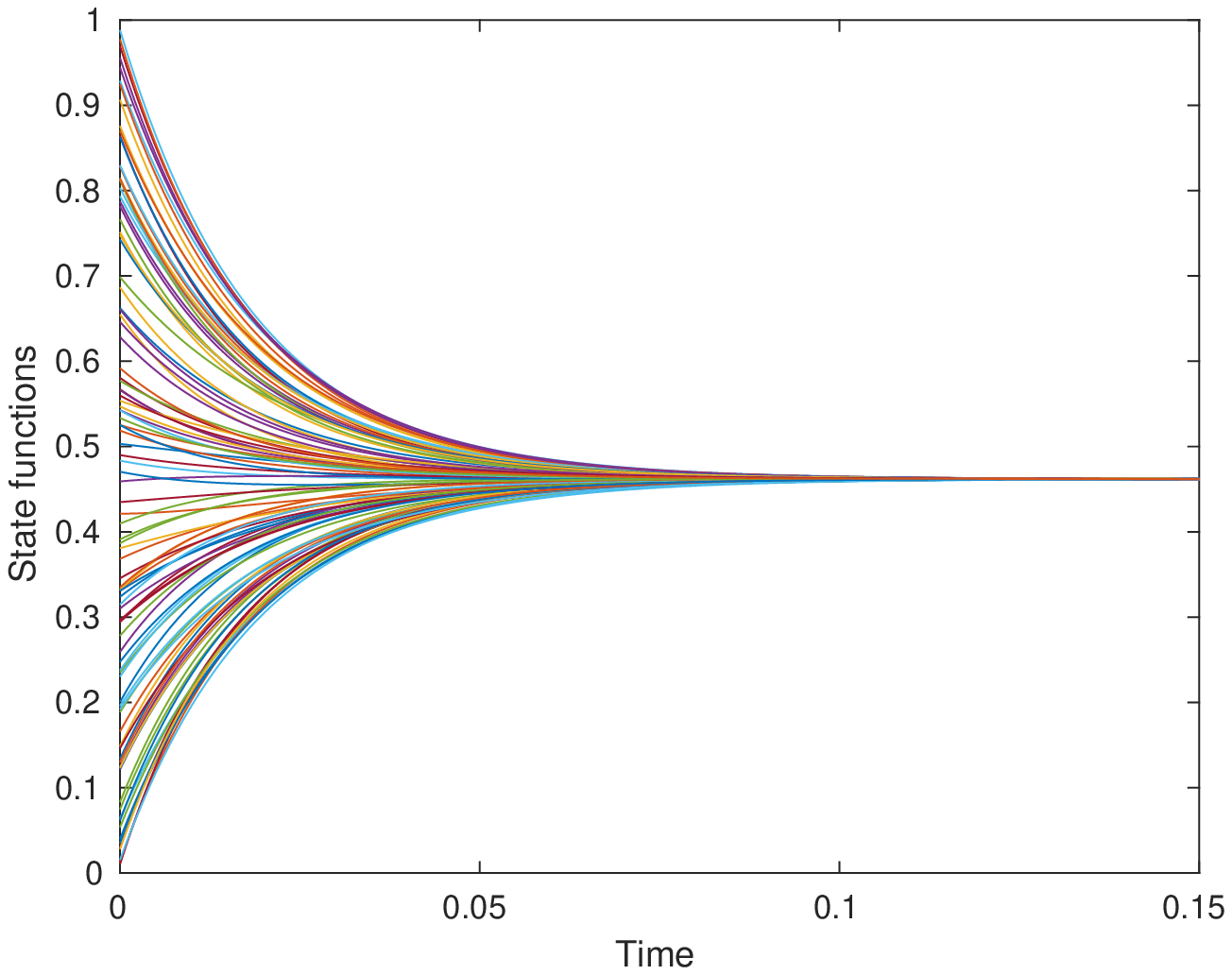} \hfill \includegraphics[width=7.3cm]{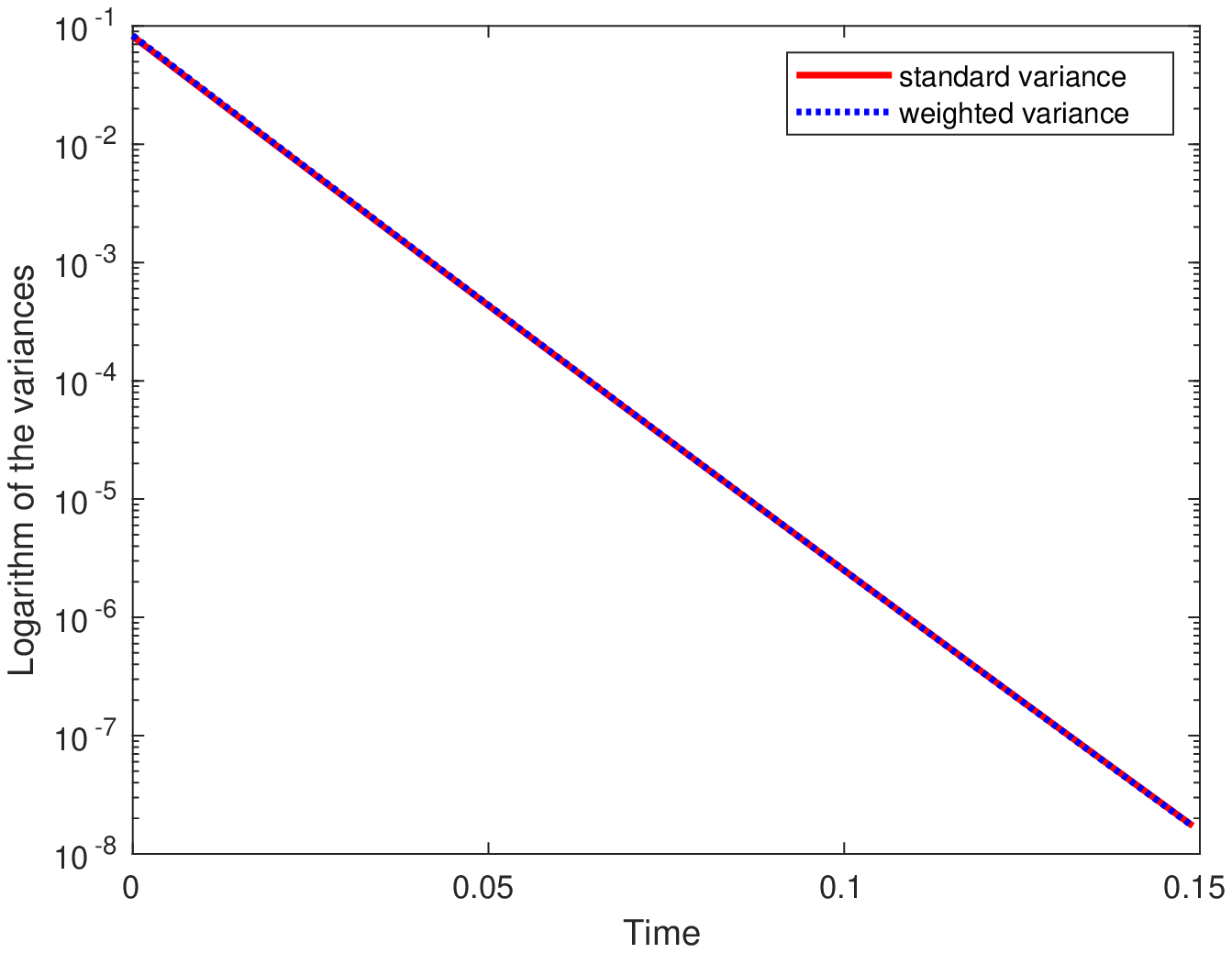}  
\caption{Time evolution on $[0,0.15]$ of (a) each $y_i$, $1\leq i\leq 100$, (b) the standard and weighted variances (log scale), for a fully connected population.} 
\label{f:Fig4}
\end{center}
\end{figure}

The first situation considers a population for which the interaction coefficients $(\sigma_{ij})$ are randomly chosen in $(0,1)$, the diagonal coefficients excepted. The computations provide a weight $v$ whose coordinates vary between $0.00830$ and $0.0118$ (to be compared to $1/N=0.01$). Figure~\ref{f:Fig4}(a) shows a very fast exponential convergence towards the weighted mean close to $0.462$. The line slope in Figure~\ref{f:Fig4}(b) is approximately $-102$. It must be compared to $2\,\,\mathrm{s}(A_2)\simeq -92.6$, since $\log V_v(t)=2\log \|y(t)-\bar y^v\|_v$. As expected, the slope is lower than $2\,\,\mathrm{s}(A_2)$, but of the same order of magnitude. 

\begin{figure}[!ht]
\begin{center}
\psfrag{State functions}{\hspace{-.2cm}{\tiny State functions}}
\psfrag{Time}{{\tiny Time}}
\psfrag{0.1}{{\tiny $\!0.1$}}
\psfrag{0.2}{{\tiny $\!0.2$}}
\psfrag{0.3}{{\tiny $\!0.3$}}
\psfrag{0.4}{{\tiny $\!0.4$}}
\psfrag{0.5}{{\tiny $\!0.5$}}
\psfrag{0.6}{{\tiny $\!0.6$}}
\psfrag{0.7}{{\tiny $\!0.7$}}
\psfrag{0.8}{{\tiny $\!0.8$}}
\psfrag{0.9}{{\tiny $\!0.9$}}
\psfrag{1}{{\tiny $\!1$}}
\psfrag{500}{{\tiny $500$}}
\psfrag{1000}{{\tiny $1000$}}
\psfrag{1500}{{\tiny $1500$}}
\psfrag{2000}{{\tiny $2000$}}
\psfrag{0}{{\tiny $\!0$}}
\psfrag{Logarithm of the variances}{{\hspace{.75cm}{\tiny Variance}}}
\psfrag{standard variance}{ {\tiny standard}}
\psfrag{weighted variance}{ {\tiny weighted}}
\includegraphics[width=7.4cm]{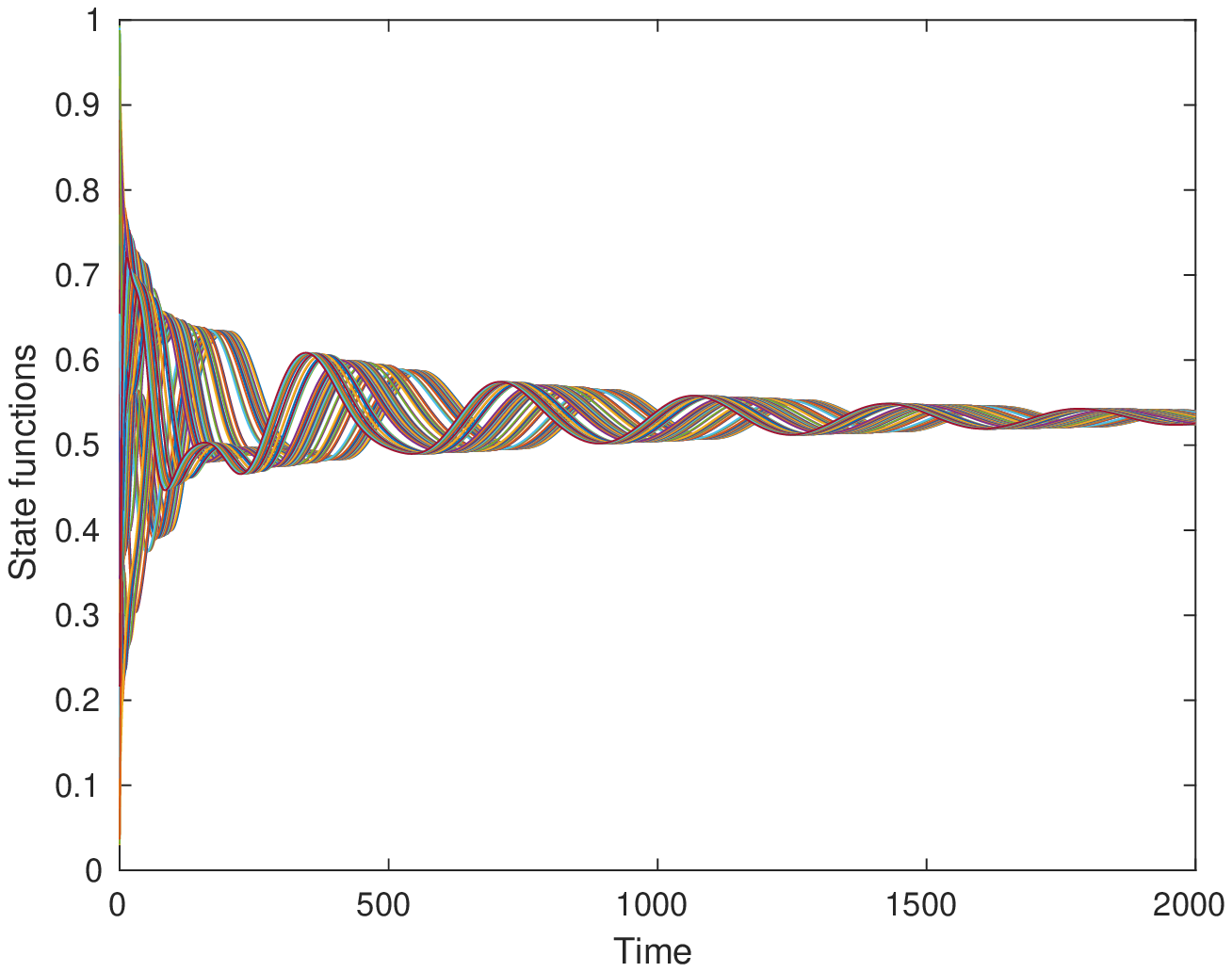} \hfill \includegraphics[width=7.4cm]{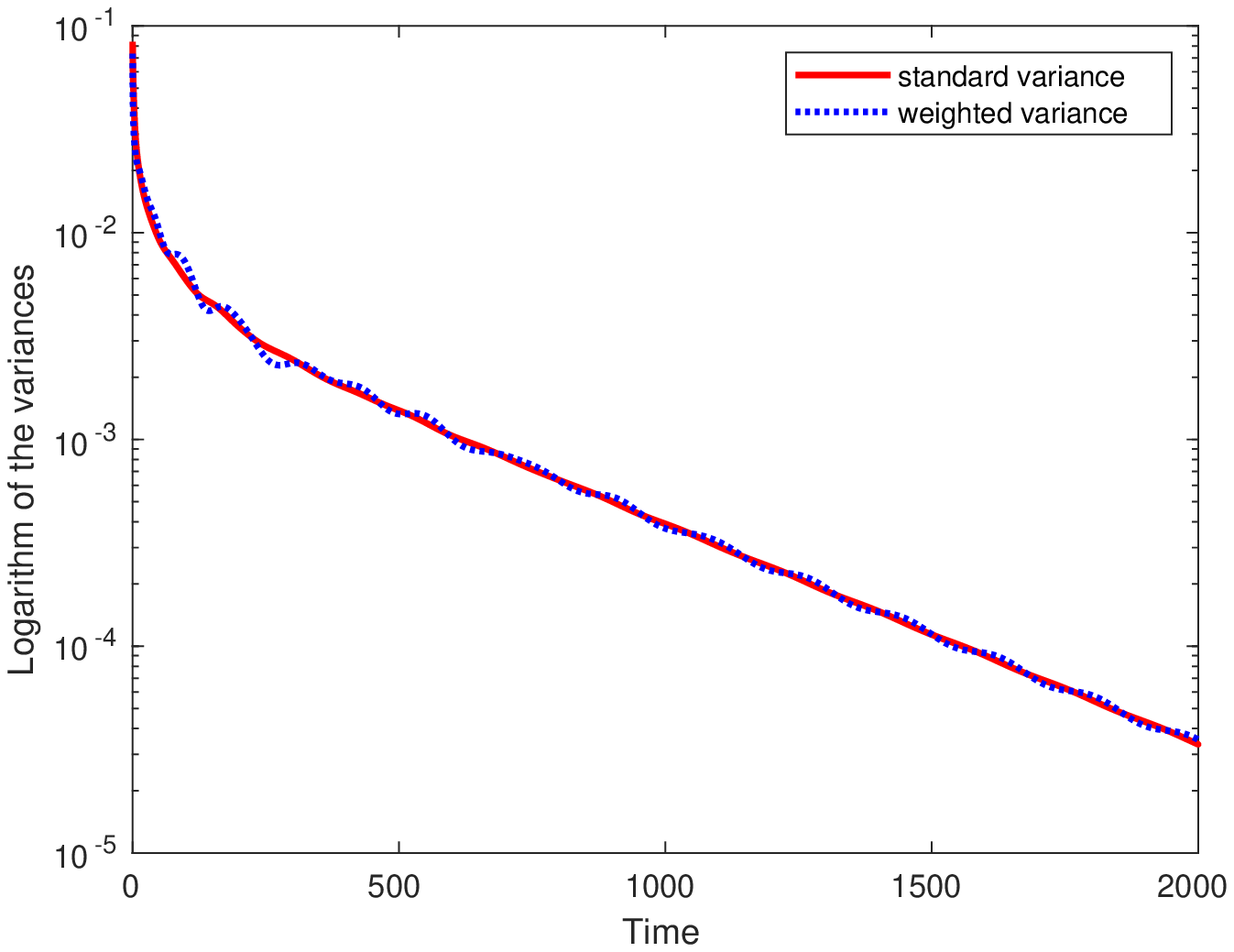}  
\caption{Time evolution on $[0,2000]$ of (a) each $y_i$, $1\leq i\leq 100$, (b) 
the standard and weighted variances (log scale), for a strongly connected population.} 
\label{f:Fig5}
\end{center}
\end{figure}

In the second situation, where the population is strongly connected, the interaction matrix is chosen such that $\sigma_{N1}$ and $\sigma_{i,i+1}$, for any $1\leq i\leq N-1$, are randomly chosen in $(0,1)$, all the other coefficients being zero. The coordinates of the weight $v$ are quite different from $0.01$ this time, they vary between $0.00291$ and $0.0530$. Figure~\ref{f:Fig5}(a) then shows a slower (with respect to the fully connected case) convergence towards the weighted mean, which is close to $0.532$. This slow convergence towards the weighted mean, with a slope equal to $-0.00240$, corresponds to the worst-case convergence scenario since $2\,\,\mathrm{s}(A_2)\simeq-0.00240$. Note that the standard and weighted variances have the same asymptotic behavior, but there are oscillations on the standard variance. It is not surprising: the weighted variance is indeed non-increasing, but we already pointed out that the standard variance does not satisfy monotonicity properties with respect to time.

\begin{figure}[!ht]
\begin{center}
\psfrag{State functions}{\hspace{-.5cm}{\small State functions}}
\psfrag{Time}{{\small Time}}
\psfrag{0.1}{{\tiny $\!0.1$}}
\psfrag{0.2}{{\tiny $\!0.2$}}
\psfrag{0.3}{{\tiny $\!0.3$}}
\psfrag{0.4}{{\tiny $\!0.4$}}
\psfrag{0.5}{{\tiny $\!0.5$}}
\psfrag{0.6}{{\tiny $\!0.6$}}
\psfrag{0.7}{{\tiny $\!0.7$}}
\psfrag{0.8}{{\tiny $\!0.8$}}
\psfrag{0.9}{{\tiny $\!0.9$}}
\psfrag{1.5}{{\tiny $\!1.5$}}
\psfrag{1}{{\tiny $\!1$}}
\psfrag{0}{{\tiny $\!0$}}
\includegraphics[width=9.3cm]{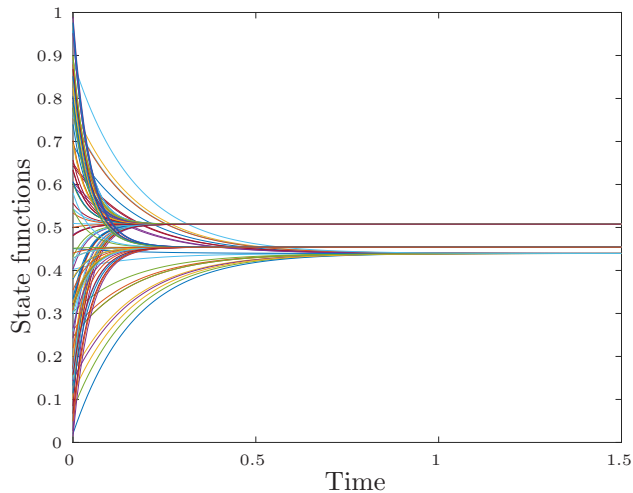} 
\caption{Time evolution on $[0,1.5]$ of each $y_i$, $1\leq i\leq 100$, in a 
partially connected population, with three strongly connected subgroups.} 
\label{f:Fig6}
\end{center}
\end{figure}

As noticed in Remark~\ref{r:partialconnect}, the third situation leads to three clusters, because the populations is divided into three non-interacting subgroups, each of them being fully connected. Hence the clustering effect appears very fast again, as it is shown on Figure~\ref{f:Fig6}. 

\section{Conclusion and prospects}\label{s:conc}

In this article, we studied the convergence to consensus, both in finite and infinite dimensions, for first-order non-symmetric systems, under the condition that the graph associated to $\sigma$ be strongly connected.

We identified a positive weight $v$ and checked that the corresponding $v$-weighted mean remains constant in time. We have moreover proved that the system exponentially converges to consensus, and exhibited the sharp exponential rate. 

The $L^2$ approach has many advantages as it allows, for instance, to use the
Jurdjevic-Quinn approach \cite{jur-qui-78} as in \cite{caponigro2017mean, caponigro2017JQ}. In our problem, we proved that the $v$-weighted variance is a Lyapunov functional, which can be an alternative to the Lyapunov functional obtained in the framework of the standard (non-weighted) $L^2$ theory.

Our analysis paves the way to further research on the subject. We mention below some of them, without claiming to be exhaustive. 

First of all, it may be interesting to study the effect of noise sources on the system, by introducing an additive noise, or by studying the system behavior when $\sigma$ is noised (\ie $A$ is noised around a fixed matrix). Another extension may consist in allowing $\sigma$ to be time-dependent. This hypothesis naturally leads to study models of influence sphere, with a possible loss of lower bounds and thus the emergence of local clusters. In such a case, the control of clusters is a question to be explored.

An open issue is the study of the system, when $\sigma$ depends on $|x_i-x_j|$, as in the original Hegselmann-Krause model \cite{heg-kra, kra}, especially the sharpness of the asymptotic convergence rate. Another open problem is the extension of our study to non-symmetric second-order models, such as generalized Cucker-Smale models \cite{caponigro2017JQ, cuc-sma, cuc-sma-2, has21, mot-tad}.

\bibliographystyle{abbrv}
\bibliography{biblio}
\addcontentsline{toc}{section}{References}

\end{document}